\documentclass[oneside,english]{amsart}
\usepackage[T1]{fontenc}
\usepackage[latin9]{inputenc}
\usepackage{amstext}
\usepackage{amsthm}
\usepackage{amssymb}
\usepackage{wasysym}
\usepackage{esint}

\makeatletter
\numberwithin{equation}{section}
\numberwithin{figure}{section}
\theoremstyle{plain}
\newtheorem{thm}{\protect\theoremname}
\theoremstyle{definition}
\newtheorem{defn}[thm]{\protect\definitionname}
\theoremstyle{plain}
\newtheorem{lem}[thm]{\protect\lemmaname}
\theoremstyle{remark}
\newtheorem{rem}[thm]{\protect\remarkname}

\makeatother

\usepackage{babel}
\providecommand{\definitionname}{Definition}
\providecommand{\lemmaname}{Lemma}
\providecommand{\remarkname}{Remark}
\providecommand{\theoremname}{Theorem}

\begin{document}
\title{$\sigma_{2}$ yamabe problem on conic 4-spheres }
\author{Hao Fang}
\address{14 MacLean Hall, University of Iowa, Iowa City, IA, 52242}
\email{hao-fang@uiowa.edu}
\author{Wei Wei}
\address{School of Mathematical Sciences, University of Science and Technology
of China, Hefei, China, 230026}
\email{weisx001@mail.ustc.edu.cn}
\thanks{H.F.'s works is partially supported by a Simons Foundation research
collaboration grant. W.W.'s works is supported by China Scholarship
Council .}
\maketitle

\section{Introduction}

In this paper we study the $\sigma_{2}$ Yamabe problem on conic four-spheres.
First we fix our notations. For a closed Riemannian manifold $(M^{n},g)$,
let Ric, $R$ and $A$ be the corresponding Ricci curvature, Scalar
curvature, and Schouten tensor, respectively. That means,
\begin{align*}
A & =\frac{1}{n-2}(\text{{\rm Ric}}-\frac{R}{2(n-1)}g),\\
R & =g^{ij}{\rm Ric}_{ij}.
\end{align*}
Let $\{\lambda(A)\}_{i=1}^{n}$ be the set of eigenvalues of $A$
with respect to $g$. Define $\sigma_{k}(g^{-1}A_{g})$ to be the
$k-$th symmetric function of $\{\lambda(A)\},$ 

\[
\sigma_{k}(g^{-1}A_{g})=\sum_{1\le i_{1}\cdots<i_{k}\le n}\lambda_{i_{1}}\lambda_{i_{2}}\cdots\lambda_{i_{k}}.
\]
We note that $\sigma_{1}(\lambda(A))={\rm Tr}A=\frac{1}{2(n-1)}R$.
When necessary, we use, for example, $R_{g}$ to denote $R,$ when
the metric needs to be specified. We define the smooth conformal metric
class $[g]=\{g_{u}=e^{2u}g,\ u\in C^{\infty}(M)\}.$ 

The classical Yamabe problem is to look for constant scalar curvature
metrics in a given conformal metric class, which has been completed
solved through works of Yamabe \cite{Y}, Trudinger \cite{Tru}, Aubin
\cite{A}, and Schoen \cite{S}. 

In \cite{V1}, Viaclovsky raised the $\sigma_{k}$ Yamabe problem,
which is to look for constant $\sigma_{k}(g^{-1}A_{g})$ curvature
metric in a given conformal class. In particular, for $k\geq2,$ the
corresponding fully nonlinear equation is the following 
\begin{equation}
\sigma_{k}(\lambda(A_{e^{2u}g}))=constant.\label{eq:k-general equation}
\end{equation}

To fully understand the relation between solutions of (\ref{eq:k-general equation})
and conformal geometry of $M,$ a so-called positive cone condition
is commonly required. We define 
\[
\mathcal{C}_{k}^{+}:=\{g,\ \sigma_{1}(A_{g})>0,\cdots,\,\sigma_{k}(A_{g})>0\}.
\]
Note that when $g\in\mathcal{C}_{k}^{+}$, equation (\ref{eq:k-general equation})
becomes a fully nonlinear elliptic partial differential equation when
$k>1.$ 

Of all cases, $\sigma_{2}$ Yamabe problem for four-manifold is of
particular interest. From an analytical point of view, this problem
is variational \cite{V1,BJ,STW}. While from a geometric point of
view, $\sigma_{2}(A)$ in four-manifold is connected with the Gauss-Bonnet-Chern
integrand. For a closed four-manifold, we have the following 
\[
k_{g}:=\int_{M}\sigma_{2}(A)dv_{g}=2\pi^{2}\chi(M)-\frac{1}{16}\int_{M}|W|^{2}dv_{g},
\]
where $W$ is the Weyl tensor of $g$ and $\chi(M)$ is the Euler
characteristic of $M$.

Chang-Gursky-Yang in \cite{CGY1} have proved that: when four-manifold
$M$ has a positive Yamabe constant and positive $k_{g}$, there exists
a conformal metric $g_{u}=e^{2u}g$ such that $\sigma_{2}(A_{g})>0.$
Furthermore, in \cite{CGY2}, they have proved that under the same
condition, for any prescribed function $f>0$, there exists a smooth
solution of (\ref{eq:k-general equation}) when $M$ is not conformally
equivalent to the round sphere. When the manifold is conformally flat,
Viacolvsky \cite{V1} has shown that under natural growth conditions
the solution is unique up to a translation. Gursky-Streets \cite{GS}
prove that solutions are unique unless the manifold is conformally
equivalent to the standard round sphere. For the conformal class of
standard round 4-sphere, see \cite{CGY3,V1,V4,LL1,LL2,CHY,CHY2}.

Many related works further explore the relation between the geometry/topology
of four-manifolds and solutions of $\sigma_{2}$ Yamabe problem or
related PDEs. See, for example, \cite{CGY4,CQY,GV5}. Also, many works
are devoted to the general $\sigma_{k}$ Yamabe problem for $k\ge2$.
We refer readers to \cite{GV0,GV1,GV2,GV3,GW1,GW2,GB,SC,L,L1,LL1,LL2,LN,PVW,STW,TW,V1,V2,V3,V4,W1,SS,BG}
and references therein.

At the same time, singular sets of locally conformally flat metrics
with positive $\sigma_{k}$ curvature are widely studied. In \cite{CHFY},
Chang, Hang, and Yang proved that if $\Omega\subset S^{n}(n\ge5)$
admits a complete, conformal metric $g$ with $\sigma_{1}(A_{g})\ge c>0,\sigma_{2}(A_{g})\ge0$,
and $|R_{g}|+|\nabla_{g}R|_{g}\le c_{0}$, then $\dim(S^{n}\backslash\Omega)<(n-4)/2.$
This has been further generalized by Gon$\acute{z}$alez \cite{G1}
and Guan-Lin-Wang \cite{GLW} to the case of $2<k<n/2$. More related
works can be seen in \cite{G0,G2,L} and their references.

In this article, we concentrate on four-manifolds with isolated singularities.
In particular, we concentrate on conic manifolds. For a closed manifold
$M$ with a smooth background Riemannian metric $g_{0},$ a singular
metric $g_{1}$ is said to have a conic singularity of order $\beta$
at a given point $p\in M$, if in a local coordinate centered at $p$,
$g_{1}=e^{2v}d(x,p)^{2\beta}g_{0}$, where $v$ is a locally bounded
function and $d(x,p)$ is the distance function with respect to $g_{0}.$ 

To better describe our singularity, we define a conformal divisor
\[
D=\sum_{i}^{q}\beta_{i}p_{i},
\]
where $p_{1},\cdots p_{q}\in M$ are $q$ distinct points, $q\in\mathbb{N},$
and $\beta_{1},\cdots,\beta_{q}\in(-1,0).$ A conic metric $g_{1}$
is said to represent the divisor $D$ with respect to the background
metric $g_{0}$, if $g_{1}$ has conic singularity of order $\beta_{i}$
at $p_{i}$ with respect to $g_{0}$ and is smooth elsewhere. We also
define the singular conformal metric class $[g_{D}]$ to be the collection
of all such metrics. Note that $[g_{D}]$ is dependent on the triple
$(M,g_{0},D)$. We call $g_{1}\in C_{k}^{+}$ if for all $x\in M\backslash\{p_{1},\cdots,p_{q}\},$
$\sigma_{1}(A_{g_{1}})>0,\cdots\sigma_{k}(A_{g_{1}})>0.$

As an example, let us describe conic metrics on standard spheres.
We may use the Euclidean model by the standard stereographic projection.
Let $g_{E}$ be the standard Euclidean metric on $\mathbb{\mathbb{R}}^{n}.$
A conic metric $g_{u}=e^{2u}g_{E}$ representing a conformal divisor
$D=\sum_{i}^{q}\beta_{i}p_{i}$, where $p_{i}\in\mathbb{R}^{n},$
$i=1,\cdots,q-1$ , $p_{q}=\infty,$ and all $\beta_{i}\in(-1,0),$
if and only if $u$ satisfies the following
\begin{itemize}
\item $u(x)=\beta_{i}\ln|x-p_{i}|+v_{i}(x)$ as $x\rightarrow p_{i}$ for
$i=1,\cdots,\,q-1;$ 
\item $u(x)=(-2-\beta_{q})\ln|x|+v_{\infty}(x)$ as $|x|\rightarrow\infty$, 
\end{itemize}
where $v_{i}(x)$ and $v_{\infty}(x)$ are bounded in their respective
neighborhoods.

Chang-Han-Yang \cite{CHY} have classified global radial solutions
to $\sigma_{k}$ Yamabe problem. Especially, in the case $k=\frac{n}{2},$
solutions of (\ref{eq:k-general equation}) have two conic points
with identical cone angles, which we call American footballs. Li has
studied the radial symmetry of solutions in the punctured Euclidean
space in \cite{L}, which, by our definition, is equivalent to the
study on the conic sphere with two isolated singular points. These
results imply that a conical sphere with two singularities and a constant
$\sigma_{n/2}$ curvature can only be a football.

Locally, a deep theorem of Han-Li-Teixeira \cite{HLT} describes the
behavior of the conformal factor $u$ near the singularity when the
$\sigma_{k}$ curvature is constant.
\begin{thm}
\label{C=00003D00005Calpha regularity} \cite{HLT} Let u(x) be a
smooth solution of \textup{$\sigma_{\frac{n}{2}}(g^{-1}A_{g})=c$}
on $B_{R}\backslash\{0\}$, where $g=e^{2u}g_{E}\in C_{n/2}^{+}$,
$c$ is a positive constant and $n$ is even. Then there exists some
constant $\beta$ with $-1<\beta\le0$ and a $C^{\alpha}$ function
$v(x)$ such that $u(x)=v(x)+\beta\log|x|$ and $v(0)=0.$ 
\end{thm}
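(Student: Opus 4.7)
The plan is to carry out a blow-down analysis at the singularity, identify the asymptotic model through the classification of radial global solutions, and then bootstrap qualitative convergence to Hölder regularity. The preferred setting is the cylinder: set $t=-\log|x|$ and $\theta=x/|x|$, so that $g_{E}=e^{-2t}(dt^{2}+g_{S^{n-1}})$. If we write $w(t,\theta):=u(e^{-t}\theta)-t$ and $\bar{g}=dt^{2}+g_{S^{n-1}}$, then $e^{2u}g_{E}=e^{2w}\bar{g}$, and by conformal invariance the equation becomes $\sigma_{n/2}(A_{e^{2w}\bar{g}})=c$ on some half-cylinder $(t_{0},\infty)\times S^{n-1}$. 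The target statement $u(x)=\beta\log|x|+v(x)$ translates to $w(t,\theta)+(1+\beta)t\to 0$ in $C^{\alpha}$ as $t\to\infty$.

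Before taking limits I would first secure two-sided bounds on $w$. The positive cone condition $g\in\mathcal{C}_{n/2}^{+}$ forces $R_{g}>0$, which, via a maximum principle comparison against the round sphere, gives an upper bound on $w$; a corresponding lower bound (equivalently the a priori inequality $\beta>-1$) follows from inserting football barriers built from the global radial solutions classified by Chang--Han--Yang. With $w$ uniformly bounded, interior a priori estimates for the $\sigma_{k}$-Yamabe equation (e.g.~\cite{GW1,GW2,STW,V1}) provide uniform $C^{k,\alpha}$ control of $w$ on any finite sub-cylinder. Then, for each sequence $t_{j}\to\infty$, I translate $w_{j}(t,\theta):=w(t+t_{j},\theta)$ and extract a subsequential limit $w_{\infty}$ solving the same equation globally on $\mathbb{R}\times S^{n-1}$, i.e.~$\sigma_{n/2}(A_{e^{2w_{\infty}}\bar{g}})=c$ on the punctured space $\mathbb{R}^{n}\setminus\{0\}$ with at most conic ends at $0$ and $\infty$. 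The combination of the Chang--Han--Yang classification and Li's radial-symmetry argument \cite{L} for $\sigma_{n/2}$ on punctured Euclidean space forces $w_{\infty}$ to be a $t$-dependent football with a cone angle $\beta\in(-1,0]$. Independence of $\beta$ from the subsequence is then enforced via a monotonicity argument using the conformally invariant quantity $\int_{B_{R}\setminus B_{r}}\sigma_{n/2}(A_{g})\,dv_{g}$ and its boundary flux on cross-sections $\{t=\mathrm{const}\}$, which identifies the cone angle as a limit of a monotone geometric invariant.

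The remaining task, promoting the qualitative convergence $w(t,\theta)+(1+\beta)t\to 0$ to a Hölder rate, is where I expect the main technical difficulty. My approach is to linearize the cylindrical equation around the football profile $w_{\beta}(t)$: writing $w=w_{\beta}+\varphi$, the variation is governed by a second-order linear elliptic operator on the cylinder whose cross-sectional operator on $S^{n-1}$ is non-degenerate after quotienting out the conformal Killing directions (which at this stage can be eliminated by fixing the location of the singular point and its projective normalization). A standard Fredholm / separation-of-variables argument produces a spectral gap $\alpha>0$, and combining this with the decay $\varphi\to 0$ yields the exponential estimate $|\varphi(t,\theta)|\lesssim e^{-\alpha t}$, equivalently $|u(x)-\beta\log|x||\lesssim|x|^{\alpha}$. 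After subtracting a constant so that $v(0)=0$, this is exactly the claimed $C^{\alpha}$ regularity. The hardest point is controlling the nonlinear error terms uniformly in $t$ so that the linear decay survives the iteration; this is what forces the earlier steps to produce not merely sequential but uniform convergence to a single football model.
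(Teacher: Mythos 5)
This theorem is not proved in the paper; it is quoted from Han--Li--Teixeira \cite{HLT}, and the authors themselves note that it ``is a special case of their main results'' and that the method there adapts Caffarelli--Gidas--Spruck \cite{CGS} and Korevaar--Mazzeo--Pacard--Schoen \cite{KMPS}. So there is no internal proof to compare against; your outline is recognizably the same general strategy as the cited work (cylindrical change of variables, a priori bounds, blow-down limits classified via the radial theory of Chang--Han--Yang and Li, then a linearized decay estimate).

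However, the a priori upper bound you treat as preliminary is where the argument, as stated, actually breaks. You claim that $g\in\mathcal{C}_{n/2}^{+}$ gives $R_{g}>0$, and that a maximum-principle comparison with the round sphere then bounds $w$ above. In cylindrical variables $R_{g}>0$ is exactly the differential inequality $\Delta w+\frac{n-2}{2}|\nabla w|^{2}<\frac{n-2}{2}$, and this is satisfied by the unbounded function $w(t)=ct$ for any $0<c<1$, so positivity of $R_{g}$ alone does not cap $w$. The genuine upper bound $u(x)+\log|x|\le C$ (equivalently $w\le C$) in \cite{HLT} comes from local gradient and Harnack estimates for the fully nonlinear operator \cite{GW1,L1} used together with the equation $\sigma_{n/2}=c$ in an essential way, not from a soft maximum-principle comparison; without it the translated family $w(\cdot+t_{j},\cdot)$ need not be precompact and the blow-down has nowhere to start. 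The remaining steps are sketched rather than argued: the ``lower bound'' cannot be a two-sided bound on $w$ itself, since $w\sim-(1+\beta)t\to-\infty$ when $\beta>-1$, so one first needs the spherical-Harnack normalization of \cite{CGS,KMPS} before passing to limits; the uniqueness of the tangent-cone angle requires a Pohozaev/flux identity, not merely ``monotonicity of a conformal invariant''; and the final exponential-decay step, as written, presupposes uniform convergence to a single football profile, which is what the argument is supposed to deliver. The outline is directionally sound, but each of these inputs is the content of \cite{HLT}, not a routine preliminary.
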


When $k=1$, the above theorem was first proved in \cite{CGS}, where
Caffarelli-Gidas-Spruck use the radial average to approximate the
solution. Many related works followed. In particular, Korevaar-Mazzeo-Pacard-Schoen
\cite{KMPS} develop analysis of linearized operators at these global
singular solutions and give an alternative proof. Han-Li-Teixeira
\cite{HLT} have successfully applied the methods of Caffarelli-Gidas-Spruck
\cite{CGS} and Korevaar-Mazzeo-Pacard-Schoen \cite{KMPS}, to the
$\sigma_{k}$ Yamabe problem $(k\ge2$) and described the regularity
of the singular solution near isolated singularity. Theorem \ref{C=00003D00005Calpha regularity}
is a special case of their main results. One of our motivations for
this work is to obtain a global description of solutions of $\sigma_{\frac{n}{2}}(g^{-1}A_{g})=c$
. As we can see later, Theorem \ref{C=00003D00005Calpha regularity}
is also the starting point of our analysis.

Another motivation for our study is from the conclusion of conic surfaces
by Troyanov. In his now classical work \cite{Tr}, Troyanov presented
the following
\begin{defn}
\label{def:old}Let $S$ be a conic 2-sphere with divisor $D=\sum_{i=1}^{q}\beta_{i}p_{i}.$
Define $\chi(S,D):=2+\sum_{i=1}^{q}\beta_{i}$, where $2$ is the
Euler characteristic of the 2-sphere. Then 
\end{defn}

\begin{itemize}
\item $(S,D)$ is called subcritical if $0<\chi(S,D)<\min\{2,2+2\min_{1\le i\le q}\beta_{i}\};$
\item $(S,D)$ is called critical if $0<\chi(S,D)=\min\{2,2+2\min_{1\le i\le q}\beta_{i}\};$
\item $(S,D)$ is called supercritical if $\chi(S,D)>\min\{2,2+2\min_{1\le i\le q}\beta_{i}\}>0.$
\end{itemize}
Accordingly, Troyanov proved the existence of a unique solution to
the conic Yamabe problem when $(S,D)$ is subcritical. Later Luo-Tian
\cite{LT} have showed that the subcritical condition is both necessary
and sufficient when $q\ge3.$ When $q=2,$ Chen-Li \cite{CL1} have
proved that only the case $\beta_{1}=\beta_{2}$ has the solution
and the corresponding manifold is the football.

Note that Troyanov's theory introduces the supercritical case, which
does not exist in the smooth category. In some sense, a sphere is
the only critical case when $M$ is smooth. Troyanov also has studied
surfaces of higher genus. See \cite{Tr} for more details.

Our first result is a Gauss-Bonnet-Chern formula for conic spheres
of general even dimension $n$ and constant positive $\sigma_{m}$
curvature, where $m=\frac{n}{2}.$
\begin{thm}
\label{thm:Gauss-bonnet-chern-1-1}Suppose that $g=e^{2u}g_{0}$ is
a conic metric on the round sphere in the class $(S^{n},g_{0},D)$
such that $g_{u}\in C_{m}^{+}$ with constant $\sigma_{m}$.

Then we have
\begin{equation}
\frac{1}{|S_{n-1}|}\int_{S^{n}}\frac{m}{2^{m-1}}\sigma_{m}(g^{-1}A_{g})dv_{g}=2-\sum_{i=1}^{q}f(\beta_{i}),\label{eq:gauss-bonnet-chern equality-1-1}
\end{equation}

where 
\[
f(\beta)=\frac{1}{2^{n-2}}\sum_{k=0}^{m-1}(\begin{array}{c}
n-1\\
k
\end{array})(2+\beta)^{k}|\beta|^{n-k-1}.
\]
\end{thm}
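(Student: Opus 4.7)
The plan is to use Theorem \ref{C=00003D00005Calpha regularity} to resolve the singularities of $u$, then exploit a divergence structure of $\sigma_m(A_g)\,dv_g$ in the conformally flat case $n = 2m$ to localize the deviation from the standard Gauss--Bonnet--Chern formula at each cone point. Passing to stereographic coordinates, I would write $g_u = e^{2u}g_E$ on $\mathbb{R}^n$ with $p_q = \infty$; by Theorem \ref{C=00003D00005Calpha regularity}, at each finite $p_i$ one has $u(x) = \beta_i\log|x - p_i| + v_i(x)$ with $v_i\in C^{\alpha}$ and $v_i(p_i) = 0$, while the analogous expansion at $\infty$ with exponent $-2-\beta_q$ is recorded in the stereographic description from the introduction. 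The analysis then takes place on the exhausting domain
\[
\Omega_{R,\epsilon} = B_R(0)\setminus\bigcup_{i=1}^{q-1}\overline{B_\epsilon(p_i)}
\]
with $R$ large and $\epsilon$ small.

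The second ingredient is the classical fact that, for a locally conformally flat metric $g_u = e^{2u}g_E$ in even dimension $n = 2m$, the top elementary symmetric polynomial of the Schouten tensor is a total divergence in $u$. Writing
\[
A_{ij}(u) = -u_{ij} + u_i u_j - \tfrac{1}{2}|\nabla u|^2\,\delta_{ij},
\]
the identity $\sigma_m(A_g)\,dv_g = \sigma_m(A_{ij}(u))\,dx$ (valid because $n = 2m$) combined with the Gauss--Bonnet--Chern structure produces an explicit vector field $T^i(u)$, polynomial in $\nabla u$ and $\nabla^2 u$, satisfying $\sigma_m(A_{ij}(u)) = \partial_iT^i(u)$. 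Applying the divergence theorem on $\Omega_{R,\epsilon}$ gives
\[
\int_{\Omega_{R,\epsilon}}\sigma_m(A_g)\,dv_g = \int_{\partial B_R}T^i\nu_i\,dS - \sum_{i=1}^{q-1}\int_{\partial B_\epsilon(p_i)}T^i\nu_i\,dS.
\]
As $\epsilon\to 0$ and $R\to\infty$ the LHS converges to $\int_{S^n}\sigma_m(A_g)\,dv_g$, and after the normalization $\frac{1}{|S_{n-1}|}\cdot\frac{m}{2^{m-1}}$ the universal piece of the boundary integral at $\partial B_R$ produces the constant $2$ appearing in \eqref{eq:gauss-bonnet-chern equality-1-1}.

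The remaining task is to identify each boundary defect with $f(\beta_i)$. Plugging the radial model $u_{\beta}(x) = \beta\log|x|$ into the definition of $A$ gives
\[
A_{ij}(u_{\beta}) = \frac{\beta(2+\beta)}{2r^2}\left(\frac{2x_ix_j}{r^2} - \delta_{ij}\right),
\]
so that $A(u_{\beta})$ has eigenvalues $\pm\beta(2+\beta)/(2r^2)$ with multiplicities $1$ and $n-1$; in particular $\sigma_m(A(u_{\beta}))\equiv 0$ when $n=2m$, which is consistent with the defect being boundary-supported. Scale invariance of $T^i$ makes $\int_{\partial B_{\rho}}T^i(u_\beta)\nu_i\,dS$ independent of $\rho$, and the $C^{\alpha}$ expansion $u = u_{\beta_i} + v_i$ near each $p_i$, combined with further derivative estimates for $v_i$ on scale-invariant annuli, shows that the true boundary integral at $\partial B_\epsilon(p_i)$ converges to the model value as $\epsilon\to 0$. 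Expanding $T^i(u_\beta)\nu_i$ in the radial/tangential decomposition of the eigenvalues of $A(u_\beta)$ produces a binomial sum of the form $\sum_{k=0}^{m-1}\binom{n-1}{k}(2+\beta)^k|\beta|^{n-k-1}$, which after the $1/2^{n-2}$ factor matches $f(\beta)$. The inversion $y = x/|x|^2$ reduces the $R\to\infty$ defect to the same model computation and supplies the contribution corresponding to $f(\beta_q)$.

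The main obstacle is twofold. First, one must write down an explicit, workable divergence primitive $T^i(u)$ for $\sigma_m(A_g)\,dv_g$ in the conformally flat setting; this is elementary for $m = 1$ but for $m \ge 2$ it is tied to the $Q$-curvature structure and requires careful bookkeeping of the polynomial identity. Second, one has to control the boundary integrals of $T^i$ on $\partial B_\epsilon(p_i)$ when $u$ differs from the radial model only by a $C^\alpha$ function $v_i$; since $T^i$ involves second derivatives of $u$, this step requires upgrading the $C^\alpha$ information from Theorem \ref{C=00003D00005Calpha regularity} to quantitative bounds on $\nabla v_i$ and $\nabla^2 v_i$ on scale-invariant annuli, which one obtains by a blow-up/Schauder argument applied to the linearization of the $\sigma_m$ equation around $u_{\beta_i}$. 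Once these two ingredients are in place, the combinatorial identification of the model boundary integral with $f(\beta)$ reduces to a routine binomial expansion.
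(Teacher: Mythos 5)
Your proposal is correct and follows essentially the same route as the paper: stereographic coordinates, a divergence primitive for $\sigma_m$ in the conformally flat case, the divergence theorem on an exhausting domain, and boundary computations at the model $u\approx\beta\log|x-p_i|$ using the asymptotics near each cone point. The two obstacles you flag are resolved in the paper by citation rather than new work -- the explicit primitive is Lemma \ref{lem:divform} (cf.\ \cite{H}), and the needed bounds on $\nabla v_i$, $\nabla^2 v_i$ already follow from the form of the Han--Li--Teixeira theorem invoked in Lemma \ref{lem:asy of u}, namely $|x-p_l|^{k}\bigl(u-\beta_l\log|x-p_l|\bigr)\in C^{k,\alpha}$, which is stronger than the bare $C^{\alpha}$ statement of Theorem \ref{C=00003D00005Calpha regularity} -- while your inversion argument for the boundary at infinity is replaced by the algebraic identity $f(-2-\beta)+f(\beta)=2$.
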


While $\sigma_{m}$ is known as the Phaffian curvature for smooth
locally conformally flat manifolds, (\ref{eq:gauss-bonnet-chern equality-1-1})
gives precise Gauss-Bonnet-Chern defect for each conic singular point.
Note that when $n=2,$ equation (\ref{eq:gauss-bonnet-chern equality-1-1})
is the classical Gauss-Bonnet-Chern formula obtained by Troyanov \cite{Tr}. 

For $m=2$, $n=4$, (\ref{eq:gauss-bonnet-chern equality-1-1}) becomes
\[
\frac{1}{|S_{3}|}\int_{S^{4}}\sigma_{2}(g^{-1}A_{g})dv_{g}=2-\sum_{i=1}^{q}\frac{\beta_{i}^{3}+3\beta_{i}^{2}}{2}.
\]

Theorem 3 is true under weaker conditions. We have proved a slightly
stronger statement. See Section 2, Theorem \ref{thm:Gauss-bonnet-chern-1}. 

While it is tempting to use the Guass-Bonnet-Chern integral to classify
conic 4-spheres as Troyanov has done in 2-dimension, the 4-dimension
case is far more complicated. In order to obtain a proper definition
for conic manifolds of dimension 4 or higher, we consider the case
of conic 4-spheres with the standard round sphere background metric.
While it is geometrically and topologically simple, it is often the
most difficult case in terms of analysis, which is indicated by previous
studies of the smooth case. To state our main result, we give the
following definition:
\begin{defn}
\label{def:New}Let $(S^{4},D,g_{S^{4}})$ be a conic 4-sphere with
the standard round background metric $g_{S^{4}}=\frac{4}{(1+|x|^{2})^{2}}g_{E}$.
For all $j=1,\cdots,q$, we denote $\widetilde{\beta_{j}}:=\bigg(\sum_{1\le i\neq j\le q}\beta_{i}^{3}\bigg)^{1/3}$. 
\end{defn}

\begin{itemize}
\item We call $(S^{4},D)$ subcritical for $\sigma_{2}$ Yamabe equation
if for any $j=1,\cdots,q$\\
 $\frac{3}{8}\beta_{j}^{2}(\beta_{j}+2)^{2}<\frac{3}{8}\widetilde{\beta_{j}}^{2}(\widetilde{\beta_{j}}+2)^{2}+(\widetilde{\beta_{j}}+\frac{3}{2})(\sum_{1\le i\neq j\le q}\beta_{i}^{2}-\widetilde{\beta_{j}}^{2}),$ 
\item We call $(S^{4},D)$ critical for $\sigma_{2}$ Yamabe equation if
there exists a $j\in\{1,\cdots,q\}$ such that\\
 $\frac{3}{8}\beta_{j}^{2}(\beta_{j}+2)^{2}=\frac{3}{8}\widetilde{\beta_{j}}^{2}(\widetilde{\beta_{j}}+2)^{2}+(\widetilde{\beta_{j}}+\frac{3}{2})(\sum_{1\le i\neq j\le q}\beta_{i}^{2}-\widetilde{\beta_{j}}^{2})$, 
\item Otherwise, we call $(S^{4},D)$ supercritical for $\sigma_{2}$ Yamabe
equation, which means that there exists a $j\in\{1,\cdots,\,q\},$\\
 $\frac{3}{8}\beta_{j}^{2}(\beta_{j}+2)^{2}>\frac{3}{8}\widetilde{\beta_{j}}^{2}(\widetilde{\beta_{j}}+2)^{2}+(\widetilde{\beta_{j}}+\frac{3}{2})(\sum_{1\le i\neq j\le q}\beta_{i}^{2}-\widetilde{\beta_{j}}^{2}).$
\end{itemize}
We remark that Definition \ref{def:New} is purely numerical and independent
of the geometric configuration of points $\{p_{i}\}$. 

Finally, we present our main theorem.
\begin{thm}
\label{thm:Main theorem-1} Let $(S^{4},\,D,\ g_{0})$ be defined
as above. Assume that $g\in\mathcal{C}_{2}^{+}$. If $(S^{4},D)$
is supercritical, there does not exist a conformal metric $g\in[g_{D}]$
with constant $\sigma_{2}$ curvature. If $(S^{4},D)$ is critical
with constant $\sigma_{2}$ curvature, then $(S^{4},g)$ is a football
as defined in \cite{CHY}. 
\end{thm}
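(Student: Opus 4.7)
\emph{Proof plan.} Fix any index $j \in \{1,\ldots,q\}$ and apply the stereographic projection sending $p_j$ to infinity, reducing to a conformally flat model $(\mathbb{R}^{4}, g_u = e^{2u}g_E)$ with constant positive $\sigma_2(g_u^{-1}A_{g_u}) = c$, conic singularities of order $\beta_i$ at the images $\tilde{p}_i$ ($i \neq j$), and end behavior $u(x) = (-2-\beta_j)\ln|x| + v_\infty(x)$ at infinity. Theorem~\ref{C=00003D00005Calpha regularity} supplies the local $C^\alpha$ expansion $u(x) = \beta_i \ln|x-\tilde{p}_i| + v_i(x)$ at each $\tilde{p}_i$, which controls the boundary integrals arising below.

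The main device is a pair of integral identities obtained on annular regions $B_R \setminus \bigcup_{i \neq j} B_\varepsilon(\tilde{p}_i)$ in the limit $R \to \infty$, $\varepsilon \to 0$. The first is the Gauss-Bonnet-Chern identity of Theorem~\ref{thm:Gauss-bonnet-chern-1-1}, which is cubic in $\beta$. The second is a Pohozaev-type identity obtained by testing the $\sigma_2$ equation against the radial conformal vector field $x \cdot \nabla u$ and integrating by parts: the boundary terms localize on the radial asymptotic models $u \sim \beta \ln|x|$ and evaluate to $\tfrac{3}{8}\beta^{2}(\beta+2)^{2}$ per singular end (including at infinity, with $\beta_j$), while the bulk retains a sign-definite contribution thanks to $g \in \mathcal{C}_2^{+}$. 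Eliminating the common bulk integral between the two identities, substituting $\widetilde{\beta_j}^{3} = \sum_{i \neq j}\beta_{i}^{3}$ to consolidate the cubic sum, and applying the sharp comparison $\sum_{i \neq j}\beta_{i}^{4} \leq \widetilde{\beta_j}^{4}$ (the $\ell^{p}$-inequality $\|(\beta_{i})\|_{\ell^{4}} \leq \|(\beta_{i})\|_{\ell^{3}}$, with equality iff exactly one of the $\beta_{i}$, $i \neq j$, is nonzero) yields
\begin{equation*}
\tfrac{3}{8}\beta_{j}^{2}(\beta_{j}+2)^{2} \;\leq\; \tfrac{3}{8}\widetilde{\beta_j}^{2}(\widetilde{\beta_j}+2)^{2} + \Bigl(\widetilde{\beta_j}+\tfrac{3}{2}\Bigr)\Bigl(\sum_{i \neq j}\beta_{i}^{2} - \widetilde{\beta_j}^{2}\Bigr).
\end{equation*}
The supercriticality hypothesis contradicts this inequality, proving non-existence. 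In the critical case, saturation forces both the sign-definite bulk term to vanish and the $\ell^{4} \leq \ell^{3}$ comparison to be an equality; this leaves exactly two singular points with equal cone angle $\widetilde{\beta_j} = \beta_{j}$, so the Chang-Han-Yang classification \cite{CHY} identifies $(S^{4}, g)$ as a football.

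The principal technical difficulty is the sharp evaluation of the Pohozaev boundary integrals: extracting $\tfrac{3}{8}\beta^{2}(\beta+2)^{2}$ from the boundary of shrinking spheres around each singularity requires more than the $C^\alpha$ information of Theorem~\ref{C=00003D00005Calpha regularity}. We anticipate needing a refined asymptotic expansion of $v_i$ and its gradient beyond $C^\alpha$, built on the framework of Han-Li-Teixeira \cite{HLT} and Korevaar-Mazzeo-Pacard-Schoen \cite{KMPS}, or alternatively an approximation by radial barriers matched to the required order. Once the two identities are secured, the algebraic manipulations of the combining step are delicate but elementary, and the equality case in the $\ell^{p}$-comparison cleanly yields the two-point rigidity needed to invoke the football classification.
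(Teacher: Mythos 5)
Your approach is genuinely different from the paper's, and it contains a real algebraic gap that I believe cannot be repaired as stated.

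The paper does not use a Pohozaev identity at all. Instead (Sections~3--4) it introduces level-set quantities $C(t)$, $D(t)$, $z(t)$ and proves a monotonicity formula: the function
\[
M(t) \;=\; \tfrac{2}{3}D(t) + \tfrac{4}{9}z(t)D(t) + \tfrac{1}{36}z(t)^{4} - C(t)
\]
is nondecreasing in $t$ (Theorem~\ref{thm:monotone inequality}), where the proof combines the divergence structure of $\sigma_2$, the inequality $\sigma_2(\widetilde{A})\le\sigma_1(\widetilde{A})^2/3$, Cauchy--Schwarz, AM--GM, and the isoperimetric inequality on level sets. The target inequality then follows from $M(+\infty)\ge M(-\infty)$ together with the asymptotics of $D$ and $z$ at $\pm\infty$.

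Here is the gap in your plan. Write $s_2 = \sum_{i\neq j}\beta_i^{2}$ and $\widetilde{\beta}=\widetilde{\beta_j}$. Expanding the paper's right-hand side gives
\[
Q := \tfrac{3}{8}\widetilde{\beta}^{2}(\widetilde{\beta}+2)^{2} + \bigl(\widetilde{\beta}+\tfrac{3}{2}\bigr)(s_2-\widetilde{\beta}^{2})
= \tfrac{3}{8}\widetilde{\beta}^{4}+\tfrac{1}{2}\widetilde{\beta}^{3}+\widetilde{\beta}\,s_2+\tfrac{3}{2}s_2.
\]
Your route --- Pohozaev boundary terms of the form $\tfrac{3}{8}\beta_i^{2}(\beta_i+2)^{2}=\tfrac{3}{8}\beta_i^{4}+\tfrac{3}{2}\beta_i^{3}+\tfrac{3}{2}\beta_i^{2}$ summed over the ends, then $\sum_{i\neq j}\beta_i^{4}\le\widetilde{\beta}^{4}$ --- can at best yield the bound
\[
\tfrac{3}{8}\beta_j^{2}(\beta_j+2)^{2} \;\le\; \sum_{i\neq j}\tfrac{3}{8}\beta_i^{2}(\beta_i+2)^{2} \;\le\; \tfrac{3}{8}\widetilde{\beta}^{4}+\tfrac{3}{2}\widetilde{\beta}^{3}+\tfrac{3}{2}s_2 \;=:\; Q'.
\]
But $Q'-Q = -\widetilde{\beta}(s_2-\widetilde{\beta}^{2})\ge 0$, since $\widetilde{\beta}<0$ and $s_2\ge\widetilde{\beta}^{2}$ by the $\ell^{3}\le\ell^{2}$ comparison; moreover $Q'>Q$ strictly as soon as $q\ge 3$. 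So the inequality you would obtain is strictly weaker than the one needed to rule out supercritical configurations. The extra term $\widetilde{\beta}\,s_2$ in $Q$ is a cross product of the $\ell^{3}$ and $\ell^{2}$ norms of $\{\beta_i\}$, produced in the paper by the coupled $z(t)D(t)$ entry of the monotone quantity $M(t)$. No combination of a Gauss--Bonnet--Chern identity and a Pohozaev identity (each giving a sum $\sum_i g(\beta_i)$ of local boundary defects with a single profile function $g$) followed by an $\ell^{p}$ comparison can manufacture such a mixed-norm product. For the same reason, your equality-case analysis misfires: in the critical case one has equality in $P\le Q$ but strict inequality $P<Q'$ whenever $q\ge 3$, so the $\ell^{4}=\ell^{3}$ equality case is never reached and no rigidity is obtained. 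The secondary issue you flag (needing expansions beyond $C^{\alpha}$ to evaluate the Pohozaev boundary terms) is real but is not the main obstruction.
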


Theorem \ref{thm:Main theorem-1} justifies Definition \ref{def:New},
while both should be considered as a 4-dimensional generalization
of Troyanov's Definition \ref{def:old} and corresponding results
in \cite{Tr,LT,CL1}. As noted earlier, when the metric is smooth,
i.e., $D$ is empty, Theorem \ref{thm:Main theorem-1} was first proved
by Viaclovsky \cite{V1}; when $D$ contains 1 or 2 points, Theorem
\ref{thm:Main theorem-1} was first proved by Li \cite{L}. Our approach
is different from those of earlier works and may be considered as
an alternative proof. For general $D$, Theorem \ref{thm:Main theorem-1}
is new.

Our method to prove this theorem is to do a careful analysis of geometric
quantities on level sets of the conformal factor $u$. Such an approach
was first used in the 2-dimensional case by \cite{B,ChLin,CL,CL1,BDM,BM,BD}
and later used by the first named author and Lai \cite{FL1,FL2,FL3}
to obtain various sharp geometric bounds. However, $\sigma_{2}$ Yamabe
equation is now a fully nonlinear equation of Monge-Amp$\grave{e}$re
type. New quantities and more importantly, new ideas to treat the
non-linearity are needed. For dimension 4, we find a surprising integrable
quantity which leads to a new monotonicity formula that is key to
our proof. 

We would like to remark that Troyanov's theory on conic surfaces may
be interpreted both in conformal geometry and K$\ddot{a}$hler geometry.
In particular, Definition \ref{def:old} is identical to stability
conditions on singular algebraic curves. See \cite{CHY} for details.
The recently settled Yau-Tian-Donaldson conjecture connects the existence
of the K$\ddot{a}$hler-Einstein metric to the stability conditions
\cite{CDS1,CDS2,CDS3,T1,T2}. The study of conic metrics on K$\ddot{a}$hler
manifolds with singularities over algebraic divisors is crucial in
the solution of the Yau-Tian-Donaldson conjecture. It is our intention
to develop a parallel theory in terms of conformal geometry. See also
Fang-Ma \cite{FM}, where Branson's Q curvature is considered in dimension
4.

In a subsequent work, we would like to address the $\sigma_{2}$ Yamabe
problem for subcritical conic 4-spheres. As we have only treated locally
conformally flat manifolds in this paper, we hope that in future works,
we can generalize the corresponding definitions and results to the
general 4-manifolds. Higher dimensional case is another direction
that we would like to explore.

The first named author would like to thank Professor Paul Yang for
discussions that motivated this work. The second named author would
like to thank the University of Iowa for hospitality during her stay
from September 2017 to May 2019. Both authors would like to thank
the anonymous referee for careful reading of an earlier version of
the paper. 

We organize the paper as follows. In Section 2, we prove the Gauss-Bonnet-Chern
formula for conic conformally flat manifold. In Section 3, we introduce
the quantities related to the level set and obtain some fundamental
equalities. In Section 4, we establish a key inequality and then prove
our main theorem.

\section{Gauss-Bonnet-Chern formula}

In this Section, we prove a Gauss-Bonnet-Chern formula for conic spheres.
Here we consider any even natural number $n$.

On a Riemann manifold $(M,g_{0})$ of dimension $n$. Let $g=e^{2u}g_{0}$
and $Ric_{g},$ $R_{g}$ denote the Ricci and scalar curvature of
$g$. Also let $Ric_{g_{0}}$, $R_{g_{0}}$ denotes the Ricci and
scalar curvature of $g_{0}$. Then under a given local coordinates, 

\begin{align*}
Ric_{g} & =Ric_{g_{0}}-(n-2)\nabla^{2}u-\triangle u\cdot g_{0}+(n-2)du\otimes du-(n-2)|\nabla u|^{2}g_{0}
\end{align*}

\[
R_{g}=e^{-2u}\{R_{g_{0}}-2(n-1)\triangle u-(n-1)(n-2)|\nabla u|^{2}\},
\]

\begin{align}
A_{g} & =\frac{1}{n-2}(Ric_{g}-\frac{R_{g}}{2(n-1)}g)\nonumber \\
 & =\frac{1}{n-2}(Ric_{g_{0}}-\frac{g_{0}}{2(n-1)}R_{g_{0}}-(n-2)\nabla^{2}u+(n-2)du\otimes du-\frac{(n-2)}{2}|\nabla u|^{2}g_{0})\nonumber \\
 & =A_{g_{0}}-\nabla^{2}u+du\otimes du-\frac{1}{2}|\nabla u|^{2}g_{0},\label{eq:general form}
\end{align}
where all derivatives of $u$ are with respect to $g_{0}.$

Let $g_{E}$ be the standard Euclidean metric on $\mathbb{R}^{n}.$
It is known that the standard metric on $S^{n}$ can be represented
as $(S^{n},\,g_{S^{n}})$, where 
\[
g_{S^{n}}=\frac{4}{(1+|x|^{2})^{2}}g_{E}
\]
 and $x=(x_{1},\cdots,x_{n})$ is the coordinate function of $\mathbb{R}^{n}$
with $|x|$ being its Euclidean norm. Let $|S_{n}|$ be the volume
of $S^{n}$ with respect to $g_{S^{n}}.$ For future use, we note
that for $n=2m$, 
\[
|S_{n}|=|S_{n-1}|\frac{2^{n-1}(m-1)!^{2}}{(n-1)!}.
\]

Let $(S^{n},\,g_{u},\,D)$ be a conical sphere as defined in the Introduction.
We assume that $g_{u}\in\mathcal{C}_{m}^{+},$ where $m=\frac{n}{2}.$
Note that for $g=g_{u}=e^{2u}g_{E},$ we have the following:

\[
{\rm Ric}=-(n-2)\nabla^{2}u+(n-2)du\otimes du+(-\triangle u-(n-2)|\nabla u|^{2})g_{E},
\]

\[
R=e^{-2u}(-2(n-1)\triangle u-(n-1)(n-2)|\nabla u|^{2}),
\]
and 
\[
A_{g}=-\nabla^{2}u+du\otimes du-\frac{|\nabla u|^{2}}{2}g_{E}.
\]
Here all derivatives are with respect to the Euclidean metric $g_{E}$. 

Similarly, we have the following:
\[
\sigma_{k}(g^{-1}A_{g})=\frac{1}{k!}\sum_{i_{1},\cdots,i_{k},j_{1,}\cdots,j_{k}=1}^{n}\delta\big(\begin{array}{ccc}
i_{1} & \cdots & i_{k}\\
j_{1} & \cdots & j_{k}
\end{array}\big)A_{i_{1}}^{j_{1}}\cdots A_{i_{k}}^{j_{k}}.
\]
We define

\[
T_{l}(g^{-1}A)_{j}^{i}=\frac{1}{l!}\sum_{i_{1}\cdots i_{l},i,j_{1},\cdots j_{l},j=1}^{n}\delta(\begin{array}{cccc}
i_{1} & \cdots & i_{l} & i\\
j_{1} & \cdots & j_{l} & j
\end{array})A_{i_{1}}^{j_{1}}\cdots A_{i_{l}}^{j_{l}},
\]
where $A_{i_{1}}^{j_{1}}=g^{j_{1}k_{1}}A_{k_{1}i_{1}}$ and $\delta$
is the Kronecker delta function.

In the rest of this paper, we consider $(S^{n},g_{u},D)$ satisfying
the following curvature equation
\begin{equation}
\sigma_{m}(g^{-1}A_{g})=(\begin{array}{c}
n\\
m
\end{array})(\frac{1}{2})^{m}.\label{k-equation}
\end{equation}
Notice that the right hand side is the corresponding value of the
standard sphere $(S^{n},\,\frac{4}{(1+|x|^{2})^{2}}|dx|^{2})$.

It is well known that in the conformally flat case, the Gauss-Bonnet-Chern
integral is just $\sigma_{m}$ up to a constant. In addition, it has
a special divergence structure as follows (see also \cite{H}):
\begin{lem}
\label{lem:divform} On a conformally flat manifold $(M^{n},\,g)$,
\[
\sigma_{m}(g^{-1}A_{g})=-\frac{1}{m}{\rm div}_{g}\{\sum_{j=1}^{m}\frac{T_{m-j}(g^{-1}A_{g})_{b}^{a}|\nabla_{g}u|_{g}^{2(j-1)}}{2^{j-1}}\nabla_{g}^{b}u\}.
\]
Especially, for $n=4$, $m=2$, we have 
\[
\sigma_{2}(g_{E}^{-1}A_{g})=-\frac{1}{2}\partial_{i}((-\triangle u\delta_{ij}+u_{ij}-u_{i}u_{j})u_{j}),
\]
where all derivatives are with respect to the Euclidean metric of
$\mathbb{R}^{4}.$
\end{lem}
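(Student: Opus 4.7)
I plan to prove the lemma by exploiting two standard ingredients: the Newton identity
\[
m\sigma_m(A) = T_{m-1}^{ij}A_{ij},
\]
and the divergence-free property $\nabla_{g,j}T_k^{ij}(g^{-1}A_g) = 0$ valid on any locally conformally flat manifold (where the Cotton tensor vanishes and the contracted second Bianchi identity forces $T_k$ to be closed). Together with the conformal-gauge expression $A_{ij} = -u_{ij} + u_iu_j - \tfrac{1}{2}|\nabla u|^2 g_{ij}$ for the Schouten tensor (relative to a local flat background), these produce the claimed divergence formula.

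For the specific $n=4$, $m=2$ identity, the computation is direct and self-contained. Starting from $A_{ij} = -u_{ij} + u_iu_j - \tfrac{1}{2}|\nabla u|^2\delta_{ij}$, I would expand
\[
2\sigma_2(A) = (\mathrm{tr}\,A)^2 - |A|^2 = (\triangle u)^2 + (\triangle u)|\nabla u|^2 - |\nabla^2 u|^2 + 2u_{ij}u_iu_j,
\]
obtained after collecting the cross terms in $|A|^2$ and using $\mathrm{tr}\,A = -\triangle u - |\nabla u|^2$. On the other side, writing $V^i := (-\triangle u\,\delta_{ij} + u_{ij} - u_iu_j)u_j$ and expanding $\partial_i V^i$ by the Leibniz rule, together with the identities $u_{iij} = (\triangle u)_j$ and $\partial_i|\nabla u|^2 = 2u_{ij}u_j$, one checks that $\partial_iV^i = -2\sigma_2(A)$. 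Matching the two sides gives the specific formula.

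For the general case $n = 2m$, I would substitute the conformal-gauge expression for $A$ into the Newton identity. The Hessian piece $-T_{m-1}^{ij}u_{ij}$ integrates by parts using divergence-freeness of $T_{m-1}$ and produces the $j=1$ term in the sum. The remaining pieces $T_{m-1}^{ij}u_iu_j$ and $-\tfrac{1}{2}(n-m+1)\sigma_{m-1}|\nabla u|^2$ are re-expressed via the recursion $T_k(A) = \sigma_k(A)I - AT_{k-1}(A)$. A short direct calculation shows $(Au)_i = \tfrac{1}{2}|\nabla u|^2 u_i - \tfrac{1}{2}\partial_i|\nabla u|^2$, and substituting this into $T_{m-1}(A)(u,u) = \sigma_{m-1}|\nabla u|^2 - T_{m-2}(A)(Au,u)$ reduces the problem to the analogous statement with $T_{m-2}$ in place of $T_{m-1}$ and an additional weight $\tfrac{1}{2}|\nabla u|^2$. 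Iterating this reduction $m-1$ times generates the full weighted sum $\sum_{j=1}^{m}\tfrac{1}{2^{j-1}}|\nabla u|^{2(j-1)}T_{m-j}^{ab}u_b$ inside the divergence.

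The main obstacle is the bookkeeping in the general case: one must verify that each step of the recursion contributes exactly the factor $\tfrac{1}{2}|\nabla u|^2$, with no residual terms surviving from the scalar piece $-\tfrac{1}{2}(n-m+1)\sigma_{m-1}|\nabla u|^2$. The algebraic identity $(Au)_i = \tfrac{1}{2}|\nabla u|^2 u_i - \tfrac{1}{2}\partial_i|\nabla u|^2$ is the key observation that forces these coefficients to align cleanly; the structure of $A$ as a perturbation of $-\tfrac{1}{2}|\nabla u|^2 g$ is what drives the dimensional reduction $m \mapsto m-1$ to close upon itself.
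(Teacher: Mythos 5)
The paper gives no internal proof of this lemma; it simply cites Z. Han's IMRN 2004 paper. So the comparison is against correctness rather than against the paper's argument.

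Your direct verification of the $n=4$, $m=2$ identity is correct. The expansion $2\sigma_2 = (\triangle u)^2 + (\triangle u)|\nabla u|^2 - |\nabla^2 u|^2 + 2u_{ij}u_iu_j$ and the computation $\partial_i V_i = -(\triangle u)^2 + |\nabla^2 u|^2 - (\triangle u)|\nabla u|^2 - 2u_{ij}u_iu_j = -2\sigma_2$ both check out, so the special case is fine.

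The general case, however, has a genuine gap. You invoke ``the divergence-free property $\nabla_{g,j}T_k^{ij}(g^{-1}A_g)=0$'' and then integrate the Hessian piece by parts against $\partial$. These are two different operators. The Newton tensor built from $g^{-1}A_g$ is divergence-free with respect to the Levi-Civita connection of $g$, but the object appearing in the lemma (and in the paper's use of it in Theorem~7) is $T_k(g_E^{-1}A_g)$ with the ordinary flat derivative $\partial_i$, and this is \emph{not} divergence-free. A direct computation with $\widehat A_{ij}=-u_{ij}+u_iu_j-\tfrac12|\nabla u|^2\delta_{ij}$ gives
\[
\partial_i T_1^{ij}(\widehat A)= -(n-2)\,u_{jk}u_k - (\triangle u)\,u_j \neq 0.
\]
This failure is not a harmless nuisance: the nonzero divergence contributes $(\partial_i T_1^{ij})u_j = -(n-2)u_{jk}u_ku_j - \triangle u\,|\nabla u|^2$, and for $n=4$ this is exactly the residual that makes the $m=2$ identity close. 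If one drops it — as your integration-by-parts step does — the recursion does \emph{not} balance; you can check directly that $T_1^{ij}u_iu_j - \tfrac{n-1}{2}\sigma_1|\nabla u|^2 + \tfrac12\partial_a(|\nabla u|^2u_a)=\triangle u|\nabla u|^2 + 2u_{ij}u_iu_j \neq 0$ for $n=4$. So the general argument as sketched would produce an incorrect identity. (Replacing everything by the $g$-connection does not rescue the step either: in flat gauge the Christoffel terms of $\nabla^g$ reproduce precisely the nonzero $\partial$-divergence you dropped, and the clean cancellation only happens because $n=2m$ makes the conformal weights match.) To make the recursion approach work one must compute $\partial_i T_{m-j}^{ij}$ explicitly at every stage and show that these terms combine with the residuals from the scalar piece $-\tfrac12(n-m+1)\sigma_{m-1}|\nabla u|^2$; that is where the real bookkeeping lies, and it is not a consequence of a divergence-free statement.
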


Before we establish the Gauss-Bonnet-Chern formula, we first need
to know the asymptotic behavior of conformal factor $u$ near singularities. 
\begin{lem}
\label{lem:asy of u}Assume $(S^{n},g_{u},D)$ has positive constant
$\sigma_{m}$ curvature and $g_{u}\in C_{m}^{+}$; then we have, for
$1\le l\le p-1$, as $|x-p_{l}|\rightarrow0$,

\begin{equation}
u_{i}(x)=\frac{\beta_{l}}{|x-p_{l}|^{2}}(x_{i}-p_{l,i})+o(\frac{1}{|x-p_{l}|}),\label{eq:firstderivative-1}
\end{equation}

\begin{equation}
u_{ij}(x)=\beta_{l}\frac{\delta_{ij}}{|x-p_{l}|^{2}}-2\beta_{l}\frac{(x_{i}-p_{l,i})(x_{j}-p_{l,j})}{|x-p_{l}|^{4}}+o(\frac{1}{|x-p_{l}|^{2}}),\label{eq:secondderivative-1}
\end{equation}

\begin{align}
H(x) & =\frac{3}{|x-p_{l}|}+o(\frac{1}{|x-p_{l}|}),\label{eq:mean curvature-1}
\end{align}
where $H(x)$ is the mean curvature of level set $\{x,\,u(x)=t\}$
near $p_{l}$ and $t$ is sufficiently large. 
\end{lem}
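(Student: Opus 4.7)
I would combine the decomposition afforded by Theorem \ref{C=00003D00005Calpha regularity} with a blow-up argument and interior regularity for the $\sigma_m$ equation. First, apply Theorem \ref{C=00003D00005Calpha regularity} near $p_l$ to write
\[
u(x) = \beta_l \log|x - p_l| + v_l(x), \quad v_l \in C^{\alpha}(B_R(p_l)), \ v_l(p_l) = 0.
\]
The first two asserted identities \eqref{eq:firstderivative-1}--\eqref{eq:secondderivative-1} coincide with the first- and second-order derivatives of $\beta_l \log|x - p_l|$, so they reduce to the pointwise bounds $|\nabla v_l(x)| = o(1/|x - p_l|)$ and $|\nabla^2 v_l(x)| = o(1/|x - p_l|^2)$.

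To establish these I would perform a blow-up analysis. For an arbitrary sequence $r_k \downarrow 0$, set
\[
\hat{u}_k(y) := u(p_l + r_k y) - \beta_l \log r_k = \beta_l \log|y| + v_l(p_l + r_k y).
\]
Since $v_l$ is continuous with $v_l(p_l) = 0$, $\hat{u}_k \to \beta_l \log|y|$ uniformly on every compact subset of $\mathbb{R}^n \setminus \{0\}$. Conformal invariance under the rescaling $x = p_l + r_k y$ implies each $\hat{g}_k := e^{2\hat{u}_k}|dy|^2$ lies in $C_m^+$ and satisfies a $\sigma_m$-Yamabe equation (whose right-hand side scales explicitly in $r_k$). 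Invoking interior $C^{2,\alpha}$ estimates for fully nonlinear elliptic equations with concave symbol (applied to $\sigma_m^{1/m}$) on fixed annuli $\{1/R \le |y| \le R\}$, the uniform convergence upgrades to $C^{2,\alpha}_{\text{loc}}$ convergence. The chain rule
\[
\nabla_x u(x) = r_k^{-1} \nabla_y \hat{u}_k(y), \qquad \nabla_x^2 u(x) = r_k^{-2} \nabla_y^2 \hat{u}_k(y),
\]
with $|y|$ bounded on the chosen annulus, then produces \eqref{eq:firstderivative-1}--\eqref{eq:secondderivative-1}, the error terms being $o(1/r_k) = o(1/|x - p_l|)$ and $o(1/r_k^2) = o(1/|x - p_l|^2)$ respectively. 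Since $r_k$ was arbitrary, these hold in the limit $|x - p_l| \to 0$. For \eqref{eq:mean curvature-1} I would use $H = -\operatorname{div}(\nabla u /|\nabla u|)$ on $\{u = t\}$, with the normal pointing away from $p_l$ (since $\beta_l < 0$ forces $u \to +\infty$ at $p_l$): substituting the just-established derivative asymptotics and noting $|\nabla u| = |\beta_l|/|x - p_l| + o(1/|x - p_l|)$, the level sets are $C^1$-close to Euclidean spheres of radius $|x - p_l|$, whose classical mean curvature is $(n-1)/|x - p_l|$, giving $H = 3/|x - p_l| + o(1/|x - p_l|)$ in $n = 4$.

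The main obstacle is the $C^{2,\alpha}$ regularity step in the blow-up. Although each $\hat{g}_k$ lies strictly in $C_m^+$ and the equation is elliptic at every fixed $k$, a direct computation shows the limit profile $\beta_l \log|y|$ has Schouten eigenvalues on the boundary of the positive $\sigma_m$-cone (for $n=4$, $m=2$ one checks that the two distinct eigenvalues of $A$ sum to zero, so $\sigma_2 \equiv 0$ on this profile), so the limiting equation degenerates. The Evans--Krylov estimates must therefore be applied at each fixed $k$ with careful tracking of the ellipticity constants, or replaced by an Arzel\`a--Ascoli compactness argument combined with uniqueness of the limit profile $\beta_l \log|y|$ to secure the needed $C^{2,\alpha}_{\text{loc}}$ convergence.
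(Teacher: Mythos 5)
Your approach is genuinely different from the paper's and, more importantly, has a gap in exactly the place you flag. The paper does not attempt to derive the derivative asymptotics from the bare $C^\alpha$ statement of Theorem~\ref{C=00003D00005Calpha regularity} via blow-up. Instead, it invokes a \emph{stronger} conclusion of the Han--Li--Teixeira theorem, namely the weighted H\"older bound
\[
|x-p_l|^k\,(u(x)-\beta_l\log|x-p_l|)\in C^{k,\alpha}(B_{R_0}(p_l)),
\]
which already packages higher-order control on $v_l=u-\beta_l\log|x-p_l|$. With this in hand, differentiating $|x-p_l|\,v_l$ and $|x-p_l|^2\,v_l$ and using $v_l(p_l)=0$ immediately yields $\nabla v_l=o(1/|x-p_l|)$ and $\nabla^2 v_l=o(1/|x-p_l|^2)$; the mean-curvature asymptotics then follow by substituting these into $H=-\operatorname{div}(\nabla u/|\nabla u|)$. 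The entire proof is a two-line computation from a stronger cited estimate, not a blow-up analysis.

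Your blow-up route would, if it worked, re-derive precisely this gain in regularity, which is the hard analytic content of \cite{HLT}. The obstruction you identify is real and is not something that can be patched with ``careful tracking of ellipticity constants'' or a generic Arzel\`a--Ascoli argument. Concretely: the rescaled equation for $\hat u_k$ is $\sigma_2(g_E^{-1}A_{e^{2\hat u_k}g_E})=\tfrac{3}{2}\,r_k^{4+4\beta_l}e^{4\hat u_k}$, and since $\beta_l\in(-1,0)$ the right-hand side tends to $0$ as $r_k\to 0$; meanwhile the limit profile $\beta_l\log|y|$ has $\lambda_4=-\lambda_1$ so $\sigma_2\equiv 0$, i.e.\ it sits on the boundary of $\Gamma_2^+$. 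Ellipticity degenerates along the blow-up, so the Evans--Krylov constants blow up and one cannot extract a $C^{2,\alpha}_{\mathrm{loc}}$-convergent subsequence. Establishing that $\nabla^2\hat u_k$ nonetheless converges to $\nabla^2(\beta_l\log|y|)$ is exactly what \cite{HLT} prove, using the Caffarelli--Gidas--Spruck radial-average machinery and Korevaar--Mazzeo--Pacard--Schoen-type linearized analysis. Your proposal therefore either needs to reproduce that argument or, as the paper does, cite the full strength of the HLT theorem rather than only the $C^\alpha$ corollary stated in Theorem~\ref{C=00003D00005Calpha regularity}.

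A smaller remark: for \eqref{eq:mean curvature-1} you appeal to the level sets being ``$C^1$-close to Euclidean spheres,'' but mean curvature is a second-order quantity, so $C^1$ closeness alone is not enough; you do also say you substitute the derivative asymptotics, which is the correct step, but that step is exactly what is blocked by the gap above.
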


\begin{proof}
These are direct consequences of the main theorem of Han-Li-Teixeira
\cite{HLT}. By Theorem \ref{C=00003D00005Calpha regularity} (Theorem
1 or Corollary 1 in \cite{HLT}), we have for some small $R_{0}>0,$
\[
|x-p_{l}|^{k}(u(x)-\beta_{l}\log|x-p_{l}|)\in C^{k,\,\alpha}(B_{R_{0}}(p_{l})).
\]
 Near $p_{l}$, we then have
\begin{align*}
 & \partial_{i}\big(|x-p_{l}|(u(x)-\beta_{l}\log|x-p_{l}|)\big)\\
 & =\frac{x_{i}-p_{l,i}}{|x-p_{l}|}(u(x)-\beta_{l}\log|x-p_{l}|)+|x-p_{l}|\partial_{x_{i}}(u(x)-\beta_{l}\log|x-p_{l}|)\\
 & =o(1),\quad|x-p_{l}|\rightarrow0.
\end{align*}
Thus, we get 
\begin{equation}
u_{i}(x)=\frac{\beta_{l}}{|x-p_{l}|^{2}}(x_{i}-p_{l,i})+o(\frac{1}{|x-p_{l}|}),\quad|x-p_{l}|\rightarrow0.\label{eq:firstderivative}
\end{equation}
A similar computation shows that as $|x-p_{l}|\rightarrow0,$
\begin{equation}
u_{ij}(x)=\beta_{l}\frac{\delta_{ij}}{|x-p_{l}|^{2}}-2\beta_{l}\frac{(x_{i}-p_{l,i})(x_{j}-p_{l,j})}{|x-p_{l}|^{4}}+o(\frac{1}{|x-p_{l}|^{2}}).\label{eq:secondderivative}
\end{equation}
Furthermore, the mean curvature of level set $\{x:\,u(x)=t\}$ for
a sufficiently large $t$ can be computed as below
\begin{align}
H(x) & =-{\rm div}(\frac{\nabla u}{|\nabla u|})\nonumber \\
 & =\frac{3}{|x-p_{l}|}+o(\frac{1}{|x-p_{l}|})\quad as\,|x-p_{l}|\rightarrow0,\label{eq:mean curvature}
\end{align}
for the singular point $p_{l},$ $l=1,\cdots,q-1.$ We have finished
the proof.
\end{proof}
The main result of this section is the following Gauss-Bonnet-Chern
formula with defects.
\begin{thm}
\label{thm:Gauss-bonnet-chern-1}

Suppose that $g=e^{2u}g_{E}$ is a conic metric on the round sphere
in the class $(S^{n},g_{S^{4}},D)$ such that (\ref{eq:firstderivative-1}),
(\ref{eq:secondderivative-1}) and (\ref{eq:mean curvature-1}) hold
near each point of $D$. Then we have
\begin{equation}
\frac{1}{|S_{n-1}|}\int_{S^{n}}\frac{m}{2^{m-1}}\sigma_{m}(g^{-1}A_{g})dv_{g}=2-\sum_{l=1}^{q}f(\beta_{l}),\label{eq:gauss-bonnet-chern equality-1}
\end{equation}
where 
\[
f(\beta)=\frac{1}{2^{n-2}}\sum_{k=0}^{m-1}(\begin{array}{c}
n-1\\
k
\end{array})(2+\beta)^{k}|\beta|^{n-k-1}.
\]

In particular, when $g$ is a conic metric $(S^{n},g_{S^{n}},D)$
such that $g\in C_{m}^{+}$ with positive constant $\sigma_{m}$ curvature,
then (\ref{eq:gauss-bonnet-chern equality-1}) holds.
\end{thm}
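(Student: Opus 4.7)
The plan is to combine the divergence identity of Lemma \ref{lem:divform} with the singular asymptotics of Lemma \ref{lem:asy of u}, and extract an explicit Gauss--Bonnet--Chern defect from each conic point by a residue-type boundary integration.

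\textbf{Step 1 (Recast in Euclidean divergence form).} I would first choose a stereographic projection of $S^{n}$ so that all singularities lie at finite points (alternatively send one of them to infinity; this changes only the bookkeeping). Since $g=e^{2u}g_{E}$ with $n=2m$, one has $dv_{g}=e^{nu}dx$ and $\sigma_{m}(g^{-1}A_{g})e^{nu}=\sigma_{m}(g_{E}^{-1}A_{g})$, so Lemma \ref{lem:divform} gives a Euclidean identity of the form $\sigma_{m}(g^{-1}A_{g})\,dv_{g}=-\frac{1}{m}\partial_{i}W^{i}\,dx$, where $W^{i}$ is a universal polynomial in $u_{i}$ and $u_{ij}$. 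For $n=4$ this $W^{i}$ is exactly the vector field written out in Lemma \ref{lem:divform}.

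\textbf{Step 2 (Integrate over a punctured domain).} For small $\epsilon>0$ and large $R>0$, I would set $\Omega_{\epsilon,R}=B_{R}(0)\setminus\bigcup_{l=1}^{q}\overline{B_{\epsilon}(p_{l})}$ and apply the Euclidean divergence theorem to obtain
\[
\int_{\Omega_{\epsilon,R}}\sigma_{m}(g^{-1}A_{g})\,dv_{g}=-\frac{1}{m}\int_{\partial B_{R}(0)}W^{i}\frac{x_{i}}{|x|}\,dS+\frac{1}{m}\sum_{l=1}^{q}\int_{\partial B_{\epsilon}(p_{l})}W^{i}\frac{(x-p_{l})_{i}}{|x-p_{l}|}\,dS,
\]
and then pass to the limits $\epsilon\to0$ and $R\to\infty$. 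The main content of the theorem is that each boundary piece has a finite and explicit limit.

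\textbf{Step 3 (Residue at each $p_{l}$).} Near $p_{l}$, write $r=|x-p_{l}|$, $\xi=(x-p_{l})/r$, and plug (\ref{eq:firstderivative-1})--(\ref{eq:mean curvature-1}) into the Euclidean formula for $A_{g}$; a direct computation gives the leading rank-one-plus-identity structure
\[
A_{ij}^{g_{E}}\sim\frac{\beta_{l}(2+\beta_{l})}{2r^{2}}(2\xi_{i}\xi_{j}-\delta_{ij}),
\]
so $g_{E}^{-1}A_{g}$ has at leading order one radial eigenvalue $\beta_{l}(2+\beta_{l})/(2r^{2})$ and $n-1$ tangential eigenvalues of opposite sign. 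Since $W^{i}$ is weighted-homogeneous of degree $-(n-1)$ under this scaling, $W^{i}\xi_{i}$ is homogeneous of the correct degree to pair with the $(n-1)$-dimensional surface measure. Expanding $T_{m-j}$ on a rank-one-plus-identity matrix produces binomial coefficients $\binom{n-1}{k}$, and combining with the weight $2^{1-j}$ from Lemma \ref{lem:divform} and the two independent scales $|\beta_{l}|$ (from $u_{i}$ and $u_{ij}$) and $2+\beta_{l}$ (from the eigenvalue formula above) yields
\[
\lim_{\epsilon\to 0}\frac{1}{m}\int_{\partial B_{\epsilon}(p_{l})}W^{i}\frac{(x-p_{l})_{i}}{|x-p_{l}|}\,dS=-|S_{n-1}|\,\frac{2^{m-1}}{m}\,f(\beta_{l}).
\]
For the outer sphere $\partial B_{R}(0)$, when no singularity has been sent to infinity, $u(x)=-2\log|x|+O(1)$, and the same formula with $\beta\rightsquigarrow -2$ gives the limit $2|S_{n-1}|\cdot 2^{m-1}/m$, accounting for the constant $2$ (the Euler characteristic of $S^{n}$) on the right-hand side; if instead $p_{q}=\infty$ with order $\beta_{q}$, the algebraic identity $f(-2-\beta_{q})=2-f(\beta_{q})$ reabsorbs the infinity contribution into the sum, unifying both conventions.

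\textbf{Main obstacle.} The step I expect to require real work is the combinatorial resummation in Step 3: one must show that the sum over $j=1,\dots,m$ of the boundary integrals of $2^{1-j}T_{m-j}(g^{-1}A_{g})_{b}^{a}|\nabla_{g}u|^{2(j-1)}\nabla_{g}^{b}u$ collapses precisely into the closed form $f(\beta)=2^{-(n-2)}\sum_{k=0}^{m-1}\binom{n-1}{k}(2+\beta)^{k}|\beta|^{n-1-k}$. The appearance of the two independent linear factors $|\beta|$ and $2+\beta$ (respectively from the $\nabla u$ and from the eigenvalues of $A$) together with the binomial coefficients $\binom{n-1}{k}$ is what makes the identity match; the factor $1/2^{n-2}$ is produced by the geometric series of the $2^{1-j}$ weights. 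Everything else (divergence theorem, dominated convergence, and the fact that error terms in Lemma \ref{lem:asy of u} vanish after surface integration by oddness of $\xi$) is standard.
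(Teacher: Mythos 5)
Your proposal is correct and takes essentially the same approach as the paper: both pass to the Euclidean divergence form of $\sigma_{m}$ over a punctured domain, compute the boundary limits at each $p_{l}$ by substituting the Han--Li--Teixeira asymptotics (\ref{eq:firstderivative-1})--(\ref{eq:mean curvature-1}) into the flux integrand, and then resum the resulting binomial expression and use the identity $f(\beta)+f(-2-\beta)=2$ to absorb the contribution from infinity. The only small inaccuracy is in your heuristic: the factor $2^{-(n-2)}$ in $f$ is not produced by a geometric series of the weights $2^{1-j}$; rather each term carries an independent factor $(1/2)^{m-j}$ from the eigenvalue $-\tfrac{1}{2}\beta(2+\beta)$ which combines with $2^{1-j}$ to yield the same constant $2^{1-m}$ for every $j$.
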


\begin{proof}
By Lemma \ref{lem:divform},

\begin{align*}
 & \int_{S^{n}\backslash\cup_{i}\{p_{i}\}}m\sigma_{m}(g^{-1}A_{g})dvol_{g}\\
= & \lim_{\varepsilon\rightarrow0}\sum_{l=1}^{q-1}\int_{\{|x-p_{l}|=\varepsilon\}}\sum_{k=1}^{m}\frac{T_{m-k}(A)_{b}^{a}|\nabla u|^{2(k-1)}}{2^{k-1}}u_{b}\nu_{a}ds\\
 & -\lim_{R\rightarrow\infty}\int_{\{x,\,|x|=R\}}\sum_{k=1}^{m}\frac{T_{m-k}(A)_{b}^{a}|\nabla u|^{2(k-1)}}{2^{k-1}}u_{b}\nu_{a}ds\\
:= & \lim_{\varepsilon\rightarrow0}\sum_{l=1}^{q-1}I_{\varepsilon}^{l}-\lim_{R\rightarrow\infty}I_{R},
\end{align*}
where 
\[
I_{\varepsilon}^{l}=\int_{\{|x-p_{l}|=\varepsilon\}}\sum_{k=1}^{m}\frac{T_{m-k}(A)_{b}^{a}|\nabla u|^{2(k-1)}}{2^{k-1}}u_{b}\nu_{a}ds,
\]

and 

\[
I_{R}=\int_{|x|=R}\sum_{k=1}^{m}\frac{T_{m-k}(A)_{b}^{a}|\nabla u|^{2(k-1)}}{2^{k-1}}u_{b}\nu_{a}ds.
\]

For each $l,$ on the hypersurface $\{x,|x-p_{l}|=\varepsilon\}$,
$\nu=(\frac{x_{1}-p_{l,1}}{|x-p_{l}|},\cdots,\:\frac{x_{n}-p_{l,n}}{|x-p_{l}|})$.
By Theorem \ref{C=00003D00005Calpha regularity}, there exists a $C^{\alpha}$
function $v_{l}$ satisfying $v_{l}(p_{l})=0$ and $u(x)-\beta_{l}\log|x-p_{l}|=v_{l}(x).$
By Lemma \ref{lem:asy of u}, we have 

\begin{align*}
 & -u_{ij}+u_{i}u_{j}-\frac{|\nabla u|^{2}}{2}\delta_{ij}\\
 & =(-\beta_{l}-\frac{\beta_{l}^{2}}{2})\frac{\delta_{ij}}{|x-p_{l}|^{2}}+(2\beta_{l}+\beta_{l}^{2})\frac{(x_{i}-p_{l,i})(x_{j}-p_{l,j})}{|x-p_{l}|^{4}}+\frac{o(1)}{|x-p_{l}|^{2}}.
\end{align*}
Thus we get

\begin{align*}
 & \sum_{a,b=1}^{n}\frac{T_{m-k}(A)_{b}^{a}|\nabla u|^{2(k-1)}}{2^{k-1}}u_{b}\nu_{a}\\
= & \frac{1}{2^{k-1}(m-k)!}\sum_{\begin{array}{c}
i_{1}\cdots i_{m-k,}a\\
j_{1,\cdots}j_{m-k},b=1
\end{array}}^{n}\delta(\begin{array}{cccc}
i_{1} & \cdots & i_{m-k} & a\\
j_{1} & \cdots & j_{m-k} & b
\end{array})\\
 & \cdot\bigg((-\beta_{l}-\frac{\beta_{l}^{2}}{2})\frac{\delta_{i_{1}j_{1}}}{|x-p_{l}|^{2}}+(2\beta_{l}+\beta_{l}^{2})\frac{(x_{i_{1}}-p_{l,i_{1}})(x_{j_{1}}-p_{l,j_{1}})}{|x-p_{l}|^{4}}+\frac{o(1)}{|x|^{2}}\bigg)\cdots\\
 & \cdot\bigg((-\beta_{l}-\frac{\beta_{l}^{2}}{2})\frac{\delta_{i_{m-k}j_{m-k}}}{|x-p_{l}|^{2}}+(2\beta_{l}+\beta_{l}^{2})\frac{(x_{i_{m-k}}-p_{l,i_{m-k}})(x_{j_{m-k}}-p_{l,j_{m-k}})}{|x-p_{l}|^{4}}+\frac{o(1)}{|x-p_{l}|^{2}}\bigg)\\
 & \cdot\big(\beta_{l}\frac{(x_{a}-p_{l,a})(x_{b}-p_{l,b})}{|x-p_{l}|^{3}}+\frac{o(1)}{|x-p_{l}|}\big)(\beta_{l}^{2}\frac{1}{|x-p_{l}|^{2}}+\frac{o(1)}{|x-p_{l}|^{2}})^{k-1}\\
= & \frac{1}{2^{k-1}(m-k)!}\sum_{\begin{array}{cc}
i_{1}\cdots i_{m-k},a,b\\
j_{1,\cdots}j_{m-k}=1
\end{array}}^{n}\delta(\begin{array}{cccc}
i_{1} & \cdots & i_{m-k} & a\\
j_{1} & \cdots & j_{m-k} & b
\end{array})(-\beta_{l}-\frac{\beta_{l}^{2}}{2})\frac{\delta_{i_{1}j_{1}}}{|x-p_{l}|^{2}}\cdots\cdot\\
 & (-\beta_{l}-\frac{\beta_{l}^{2}}{2})\frac{\delta_{i_{m-k}j_{m-k}}}{|x-p_{l}|^{2}}\beta_{l}\frac{(x_{a}-p_{l,a})(x_{b}-p_{l,b})}{|x-p_{l}|^{3}}(\beta_{l}^{2}\frac{1}{|x-p_{l}|^{2}})^{k-1}+o(\frac{1}{|x-p_{l}|^{n-1}})\\
= & (-\beta_{l}-\frac{\beta_{l}^{2}}{2})^{m-k}\beta_{l}^{2(k-1)+1}\frac{(n-1)(n-2)\cdots(n-m+k)}{2^{k-1}(m-k)!|x-p_{l}|^{n-1}}+o(\frac{1}{|x-p_{l}|^{n-1}}).
\end{align*}
 Therefore,
\begin{align*}
\\
\lim_{\epsilon\to0}\sum_{l=1}^{q-1}I_{\epsilon}^{l}= & \lim_{\varepsilon\rightarrow0}\sum_{l=1}^{q-1}\int_{\{|x-p_{l}|=\varepsilon\}}\sum_{k=1}^{m}\frac{T_{m-k}(A)_{b}^{a}|\nabla u|^{2(k-1)}}{2^{k-1}}u_{b}\nu_{a}ds\\
= & \lim_{\varepsilon\rightarrow0}\sum_{l=1}^{q-1}\sum_{k=1}^{m}[(-\beta_{l}-\frac{\beta_{l}^{2}}{2})^{m-k}\beta_{l}^{2(k-1)+1}\frac{(n-1)(n-2)\cdots(n-m+k)}{2^{k-1}(m-k)!|\varepsilon|^{n-1}}+o(\frac{1}{|\varepsilon|^{n-1}})]\varepsilon^{n-1}|S_{n-1}|\\
= & \sum_{l=1}^{q-1}\sum_{k=1}^{m}[(-\beta_{l}-\frac{\beta_{l}^{2}}{2})^{m-k}\beta_{l}^{2(k-1)+1}\frac{(n-1)(n-2)\cdots(n-m+k)}{2^{k-1}(m-k)!}|S_{n-1}|\\
= & \sum_{l=1}^{q-1}\sum_{k=1}^{m}(\begin{array}{c}
n-1\\
m-k
\end{array})(\frac{1}{2})^{m-1}(-2\beta_{l}-\beta_{l}^{2})^{m-k}\beta_{l}^{2(k-1)+1}|S_{n-1}|\\
= & \sum_{l=1}^{q-1}\sum_{k=1}^{m}(\begin{array}{c}
n-1\\
m-k
\end{array})(\frac{1}{2})^{m-1}(-2-\beta_{l})^{m-k}\beta_{l}^{m+k-1}|S_{n-1}|\\
= & |S_{n-1}|\sum_{l=1}^{q-1}(-2^{m-1}f(\beta_{l})).
\end{align*}
As $u(x)=(-2-\beta_{q})\log|x|+v_{\infty}(x),\,as\,|x|\rightarrow\infty,$
where $v_{\infty}$ is locally bounded, by a similar computation which
we will omit here, we get that 
\[
\lim_{R\rightarrow\infty}I_{R}=|S_{n-1}|(-2^{m-1}f(-2-\beta_{q})).
\]

Finally, to finish the proof of Theorem \ref{thm:Gauss-bonnet-chern-1},
we note the following simple fact 
\[
2=f(-2-\beta)+f(\beta).
\]
 Equality (\ref{eq:gauss-bonnet-chern equality-1}) is thus proved.
The second part of Theorem follows from Lemma 7.
\end{proof}
\begin{rem}
It is obvious that the Gauss-Bonnet-Chern formula for singular manifolds
is highly dependent on the asymptotic behavior of the metric near
the singularity. The $\sigma_{2}$ Yamabe equation is required to
obtain the needed local regularity to compute the Gauss-Bonnet-Chern
defect near each singularity. 
\end{rem}

\section{Level set and Related functions}

In this section, we study global solutions of (\ref{k-equation})
via the level set method. In \cite{HLT}, the authors have proved
that the radial average of the solution is a good approximation to
the solution and satisfies an ODE, which is an approximation to the
ODE satisfied by a radial solution to (\ref{k-equation}). In this
paper, we will instead use some quantities related to the level set
and obtain an ordinary differential inequality, which is inspired
by \cite{FL1,FL2,FL3}. As the $\sigma_{2}$ equation is fully nonlinear
and quantities used in dimension 2 are not sufficient, we introduce
some new functions constructed by curvatures of the level set and
gradient of the solution.

We begin with the following definition about the level sets of the
conformal factor $u:$
\begin{align*}
L(t) & =\{x:u=t\}\subset M,\\
S(t) & =\{x:u\geq t\}\subset M.
\end{align*}

For a fixed $t,$ it is clear that $L(t)$ is smooth at a point $P\in L(t)$
if and only if $\nabla u\neq0.$ Define 
\[
S=\{x\in\mathbb{R}^{4}\backslash\{p_{1},\cdots,p_{q}\}|\,\nabla u(x)=0\}.
\]

\begin{lem}
\label{lem:measure 0}For $U\subset\mathbb{R}^{4},$ if $\sigma_{2}(g_{u})\neq0$
in $U$, then $\mathcal{H}^{3}(S\cap U)=\mathcal{H}^{4}(S\cap U)=0,$
where \textup{$\mathcal{H}^{3}$,} $\mathcal{H}^{4}$ is the 3-dimensional
and 4-dimensional Hausdorff measure. 
\end{lem}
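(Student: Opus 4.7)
The plan is to show that at every point $x_{0}\in S\cap U$ the Hessian $\nabla^{2}u(x_{0})$ has rank at least two, and then to deduce the Hausdorff measure bound by stratifying $S$ by the rank and applying the implicit function theorem.

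First I would use the Schouten tensor formula from Section 2. Since $g_{E}$ is flat, $A_{g}=-\nabla^{2}u+du\otimes du-\tfrac{1}{2}|\nabla u|^{2}g_{E}$, so at any $x_{0}\in S$ the gradient terms vanish and $A_{g}(x_{0})=-\nabla^{2}u(x_{0})$. Because $g=e^{2u}g_{E}$,
\[
\sigma_{2}\bigl(g^{-1}A_{g}\bigr)(x_{0})=e^{-4u(x_{0})}\,\sigma_{2}\bigl(\lambda(-\nabla^{2}u(x_{0}))\bigr)=e^{-4u(x_{0})}\,\sigma_{2}\bigl(\lambda(\nabla^{2}u(x_{0}))\bigr),
\]
using that $\sigma_{2}$ is invariant under $M\mapsto -M$. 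The hypothesis $\sigma_{2}\neq 0$ on $U$ therefore gives $\sigma_{2}(\lambda(\nabla^{2}u(x_{0})))\neq 0$ at every $x_{0}\in S\cap U$.

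The crucial elementary observation is that for a $4\times 4$ symmetric matrix $M$, $\sigma_{2}(\lambda(M))\neq 0$ forces $\mathrm{rank}(M)\geq 2$: if $M$ had rank at most one, then at most one eigenvalue could be nonzero and every pairwise product appearing in $\sigma_{2}$ would contain a zero factor. Consequently $\mathrm{rank}(\nabla^{2}u)\geq 2$ on $S\cap U$.

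Now consider the smooth map $F:=\nabla u:U\setminus\{p_{1},\ldots,p_{q}\}\to\mathbb{R}^{4}$, for which $S=F^{-1}(0)$ and $dF(x_{0})=\nabla^{2}u(x_{0})$. I would stratify by rank, $S^{(k)}:=\{x\in S\cap U:\mathrm{rank}(\nabla^{2}u(x))=k\}$ for $k\in\{2,3,4\}$. Around any point of $S^{(k)}$ choose $k$ components of $F$ whose differentials remain linearly independent on a neighborhood and apply the implicit function theorem to their joint zero set; this produces a smooth submanifold of dimension $4-k\leq 2$ that locally contains $S^{(k)}$. Covering $U$ by countably many such neighborhoods shows that $S\cap U$ lies in a countable union of smooth submanifolds of $\mathbb{R}^{4}$ of dimension at most $2$, whence $\mathcal{H}^{3}(S\cap U)=0$. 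Since the Hausdorff dimension is then at most $2$, we automatically obtain $\mathcal{H}^{4}(S\cap U)=0$. The only substantive step is the rank lower bound; the rank-drop phenomenon is handled painlessly by the stratification, and the full $\mathcal{C}_{2}^{+}$ condition is not required, only the scalar hypothesis $\sigma_{2}\neq 0$.
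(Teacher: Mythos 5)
Your proof is correct and follows essentially the same route as the paper's: at a critical point $\nabla u=0$, so $g^{-1}A_g$ reduces to $-e^{-2u}\nabla^2 u$, the hypothesis forces $\sigma_2(\nabla^2 u)\neq 0$, and the implicit function theorem applied to two components of $\nabla u$ traps $S$ locally in a codimension-two submanifold. The only cosmetic difference is that the paper invokes the identity $\sigma_2(\lambda(M))=\sum_{i<j}(M_{ii}M_{jj}-M_{ij}^2)$ to immediately exhibit two indices $i\neq j$ with $\nabla u_i,\nabla u_j$ linearly independent, whereas you pass through the abstract rank-$\geq 2$ statement and a rank stratification; the stratification is harmless but unnecessary, since any two components with linearly independent gradients near $P$ already give $S\cap B_{r_0}(P)\subset\{u_i=u_j=0\}$, a $2$-dimensional slice, which is all the Hausdorff measure bound requires.
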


\begin{proof}
For any $P\in S\cap U$, since $\sigma_{2}(g_{u})(P)\neq0$ and $|\nabla u|(P)=0$,
we have $\sigma_{2}(\nabla^{2}u)(P)\neq0.$ Thus, there exist $i,j\in\{1,2,3,4\}$,
$i\neq j$, such that $u_{ii}u_{jj}-u_{ij}^{2}|_{P}\neq0$. Hence,
$\nabla u_{i}(P)$ and $\nabla u_{j}(P)$ are linearly independent.
Thus, by implicit function theorem, $S_{i}=\{u_{i}=0\}$ and $S_{j}=\{u_{j}=0\}$
are both locally smooth and transversal to each other. Hence, for
some small $r_{0},$ we know $S\cap B_{r_{0}}(P)\subset S_{i}\cap S_{j}\cap B_{r_{0}}(P)$
has vanishing $\mathcal{H}^{3}$ and $\mathcal{H}^{4}$ measure. 
\end{proof}
Now we describe our local coordinate choice near a fixed point $P\in\mathbb{R}^{4}\backslash[S\cup\{p_{1},\cdots,p_{q}\}].$
Let $t=u(P)$. We define 
\[
x^{4}(Q)=-{\rm sgn}(u(Q)-t){\rm dist}(Q,L(t))
\]
 for $Q$ near $P.$ Note that this is well defined when $L(t)$ is
smooth at $P$, which is true by our choice of $P$. We also define
local normal coordinate functions $x^{1},x^{2},x^{3}$ on an open
set $V\subset L(t)$ near $P$ and then extend them smoothly to an
open set $U\subset\mathbb{R}^{4}$. Thus, we have got a local coordinate
system $\{x^{i}\},$ $i=1,\cdots,4$ of $\mathbb{R}^{4}$ near $P$
such that $<\frac{\partial}{\partial x^{i}},\frac{\partial}{\partial x^{j}}>|_{P}=\delta_{ij}$,
$<\frac{\partial}{\partial x^{4}},\frac{\partial}{\partial x^{4}}>|_{U}=1$. 

As before, we use $\nabla$ to denote the Levi-Civita connection of
$g_{E}$ and write $u_{i}=\nabla_{\frac{\partial}{\partial x^{i}}}u$
and $\nabla_{ij}u=\nabla_{i}\nabla_{j}u=u_{ij}$. By definition of
$x^{4},$ $\frac{\partial}{\partial x^{4}}|_{V}=-\frac{\nabla u}{|\nabla u|}$.
Especially, we note that $u_{44}$ is independent of choices of $x^{1}$,
$x^{2}$ and $x^{3}$ and well defined on $L(t)\backslash S$. Let
$\nabla_{ab}^{L}u$ be the Hessian of $u$ with respect to the induced
metric on $L(t)$. In the following, $\alpha,\,\beta$ range from
1 to $3$. Notice that $\nabla_{\alpha}^{L}u=0$ on $L(t).$ 

Let $h_{\alpha\beta}$ be the second fundamental form of the level
set $L(t)$ with respect to the outward normal vector. We have the
following Gauss-Weingarten formula

\begin{equation}
\nabla_{\alpha\beta}u=\nabla_{\alpha\beta}^{L}u+h_{\alpha\beta}u_{4}.\label{eq:gauss-weingarten1}
\end{equation}

We may now describe the Schouten tensor using our choice of local
coordinates near $P$. In particular, by (\ref{eq:general form}),
we write $g_{E}^{-1}A_{g}$ as a symmetric matrix as 

\begin{equation}
g_{E}^{-1}A_{g}(P)=\left(\begin{array}{cccc}
 &  &  & -\nabla_{41}u\\
 & h_{\alpha\beta}|\nabla u|-\frac{|\nabla u|^{2}}{2}\delta_{\alpha\beta} &  & -\nabla_{42}u\\
 &  &  & -\nabla_{43}u\\
-\nabla_{41}u & -\nabla_{42}u & -\nabla_{43}u & -\nabla_{44}u+\frac{|\nabla u|^{2}}{2}
\end{array}\right).\label{eq:expression of A}
\end{equation}

For future use, we also define a symmetric $3\times3$ matrix as 
\[
\widetilde{A}(P):=(h_{\alpha\beta}|\nabla u|-\frac{|\nabla u|^{2}}{2}\delta_{\alpha\beta}).
\]

For simplicity, we use $\fint_{L(t)}$ , $\fint_{S(t)}$ to represent
$\frac{1}{|S_{3}|}\varoint_{L(t)}$, $\frac{1}{|S_{3}|}\varoint_{S(t)}$
, respectively. We define

\[
A(t)=\fint_{S(t)}e^{4u}dl
\]
\[
B(t)=\fint_{S(t)}dl,
\]
\[
C(t)=e^{4t}B(t),
\]
and 
\[
z(t)=-(\fint_{L(t)}|\nabla u|^{3}\ dl)^{1/3}.
\]

\begin{align*}
\Sigma_{0}(t) & =\fint_{L(t)}|\nabla u|^{3}\ dl=-z^{3},\\
\Sigma_{1}(t) & =\fint_{L(t)\backslash S}\{2H|\nabla u|^{2}-3|\nabla u|^{3}\}\ dl,
\end{align*}
and finally,

\[
D(t)=\frac{1}{4}(\Sigma_{0}(t)+\Sigma_{1}(t)).
\]
Here $H$ is the mean curvature of the level set $L(t),$ and $dl$
is the induced 3-dimensional measure on $L(t).$ When no confusion
arises, we may omit $dl$. 

Note that while $g_{E}^{-1}A_{g}$ and $\tilde{A}$ are defined by
a local coordinate near a smooth point $P$ of $M$, here $A,B,C,z,\Sigma_{0},\Sigma_{1}$
are independent of the coordinates choice. 

We may now derive behaviors of our new quantities near singularities
by making use of the asymptotic behavior of $u$. A direct consequence
of (\ref{eq:firstderivative-1}), (\ref{eq:secondderivative-1}) and
(\ref{eq:mean curvature-1}) is the following
\begin{lem}
Suppose that $g=e^{2u}g_{E}$ is a conic metric on the round sphere
in the class $(S^{n},D)$ such that (\ref{eq:firstderivative-1})(\ref{eq:secondderivative-1})(\ref{eq:mean curvature-1})
hold near each singular point. We have the following:
\begin{align}
D(+\infty):=\lim_{t\rightarrow+\infty}D(t) & =\frac{1}{4}\sum_{i=1}^{q-1}(|\beta_{i}|^{3}+3(-2\beta_{i}-\beta_{i}^{2})|\beta_{i}|)\nonumber \\
 & =\frac{3}{2}\sum_{i=1}^{q-1}|\beta_{i}|^{2}-\frac{1}{2}\sum_{i=1}^{q-1}|\beta_{i}|^{3},\label{eq:D+infinity}
\end{align}
\end{lem}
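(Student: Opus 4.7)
The plan is to reduce the averaged integrals defining $\Sigma_0(t)$ and $\Sigma_1(t)$ to localized computations around each of the finite singular points, using the asymptotic expansions (\ref{eq:firstderivative-1})--(\ref{eq:mean curvature-1}).

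First I would show that for all sufficiently large $t$, the level set $L(t)$ is a disjoint union $\bigsqcup_{l=1}^{q-1} L_l(t)$, where each $L_l(t)$ is a topological $S^3$ shrinking to $p_l$ as $t \to +\infty$. Near $p_l$ the conformal factor satisfies $u(x) = \beta_l \log|x-p_l| + v_l(x)$ with $v_l$ continuous and $v_l(p_l)=0$, so $u \to +\infty$ as $x \to p_l$; by (\ref{eq:firstderivative-1}), $\nabla u$ is asymptotically radial and nonzero, which yields a star-shaped parametrization $L_l(t) = \{p_l + r_l(t,\omega)\omega : \omega \in S^3\}$ with $r_l(t,\omega) \to 0$ uniformly. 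No contribution comes from the neighborhood of infinity because $-2-\beta_q < 0$ forces $u \to -\infty$ as $|x|\to\infty$. Moreover, $\nabla u$ does not vanish on $L_l(t)$ for large $t$, so the distinction between $L(t)$ and $L(t)\backslash S$ can be ignored in $\Sigma_1$.

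Next I would expand the integrands on each $L_l(t)$. Writing $r = |x - p_l|$, equations (\ref{eq:firstderivative-1}) and (\ref{eq:mean curvature-1}) give $|\nabla u|^2 = \beta_l^2/r^2 + o(r^{-2})$ and $H = 3/r + o(r^{-1})$. From the star-shaped parametrization $r = r_l(t,\omega)$, the induced measure is $dl = r^3(1 + o(1))\,d\omega$, where $d\omega$ is the round measure on the unit $S^3$ and the $o(1)$ error comes from bounding $|\nabla_\omega r_l|/r$ by implicit differentiation of $u(p_l + r\omega)=t$ together with (\ref{eq:firstderivative-1}). Multiplying out, the $r^{-3}$ singularities in the integrands cancel exactly with $r^3$ from the measure, so
\[
|\nabla u|^3 \, dl = \bigl(|\beta_l|^3 + o(1)\bigr)\, d\omega, \qquad \bigl(2H|\nabla u|^2 - 3|\nabla u|^3\bigr)\, dl = \bigl(6\beta_l^2 - 3|\beta_l|^3 + o(1)\bigr)\, d\omega,
\]
uniformly in $\omega \in S^3$ as $t \to +\infty$.

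Finally, integrating over $S^3$, dividing by $|S_3|$ to match the definition of $\fint$, and summing over $l$, I would obtain $\Sigma_0(+\infty) = \sum_{l=1}^{q-1}|\beta_l|^3$ and $\Sigma_1(+\infty) = \sum_{l=1}^{q-1}(6\beta_l^2 - 3|\beta_l|^3)$, whence $D(+\infty) = \tfrac14 \sum_{l=1}^{q-1}(6\beta_l^2 - 2|\beta_l|^3) = \tfrac32 \sum|\beta_l|^2 - \tfrac12 \sum|\beta_l|^3$. The alternate form stated in (\ref{eq:D+infinity}) follows by rewriting $6\beta_l^2 - 3|\beta_l|^3 = 3(-2\beta_l - \beta_l^2)|\beta_l|$, valid since $\beta_l < 0$. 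The main technical obstacle will be making the area-element expansion $dl = r^3(1+o(1))\,d\omega$ rigorous: the asymptotics (\ref{eq:firstderivative-1})--(\ref{eq:secondderivative-1}) give only $o(\cdot)$ control on the derivatives of $u$, so one must verify that the tangential derivatives $\partial_\omega r_l$ are genuinely small compared to $r_l$. This is handled by differentiating $u(p_l + r\omega) = t$ in $\omega$ and using that the radial component $\partial_r u = \beta_l/r + o(1/r)$ dominates the tangential component, yielding $|\nabla_\omega r_l|/r_l = o(1)$. Once this is in place, all error terms integrate to $o(1)$ over $S^3$ and the passage to the limit follows by dominated convergence.
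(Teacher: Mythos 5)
Your proposal is correct, but it takes a genuinely different route from the paper. The paper's proof is indirect: it applies the divergence structure of Lemma \ref{lem:divform} to the region $\{u>t\}\cap B_r(p_i)\setminus B_{r_0}(p_i)$, uses the curvature equation $\sigma_2=\frac{3}{2}e^{4u}$ to identify the bulk integral with $\frac{3}{2}|S_3|A_i(t)$, computes the inner boundary contribution over a \emph{standard coordinate sphere} $\partial B_{r_0}(p_i)$ (where the area element is exactly $r_0^3\,d\omega$ and no geometric estimate is needed), and then lets $t\to\infty$ using $A_i(t)\to 0$ to pin down $D_i(+\infty)$. Your proof is direct: you work on the level sets themselves, establish the star-shaped parametrization $L_l(t)=\{p_l+r_l(t,\omega)\omega\}$, and prove $dl=r_l^3(1+o(1))\,d\omega$ by controlling $|\nabla_\omega r_l|/r_l=o(1)$ through implicit differentiation and the uniform $o(1/r)$ tangential-gradient estimate supplied by the Han--Li--Teixeira regularity (the $C^{k,\alpha}$ weighted bounds make the $o(\cdot)$ in (\ref{eq:firstderivative-1}) uniform in angle, so your estimate does hold uniformly as needed). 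The trade-off is clear: the paper's argument uses the equation (\ref{k-equation}) but dodges the level-set area-element analysis entirely, whereas yours is equation-free and thus slightly more elementary and self-contained, at the cost of the extra geometric step. In fact, the paper only extracts the normalization $\fint_{\{u=t\}\cap B_r(p_i)} r^{-3}(1+o(1))\,dl\to 1$ as a \emph{corollary} of its indirect identification of $D_i(+\infty)$; you prove that normalization first and read off the limit from it. Both are valid, and both ultimately rest on the Han--Li--Teixeira expansion; I would only caution you to state explicitly that the asymptotics (\ref{eq:firstderivative-1}) are uniform (citing the $C^{k,\alpha}$ statement in \cite{HLT}), since without that uniformity the passage from $|\nabla_\omega r_l|=o(r_l)$ pointwise to $dl=r_l^3(1+o(1))\,d\omega$ in the integral would not follow.
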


\begin{equation}
D(-\infty):=\lim_{t\rightarrow-\infty}D(t)=\frac{3}{2}(2+\beta_{\infty})^{2}-\frac{1}{2}(2+\beta_{\infty})^{3},\label{eq:D-infinity}
\end{equation}

\begin{equation}
\lim_{t\rightarrow+\infty}z(t)=(\sum_{i=1}^{q-1}\beta_{i}^{3})^{1/3},\quad\lim_{t\rightarrow-\infty}z(t)=-2-\beta_{\infty},\label{eq:z-infinity}
\end{equation}

\begin{equation}
\lim_{t\rightarrow+\infty}C(t)=\lim_{t\rightarrow+\infty}\sum_{i=1}^{q-1}e^{4t(1+\frac{1}{\beta_{i}})}=0,\quad\lim_{t\rightarrow-\infty}C(t)=0.\label{eq:C-infinity}
\end{equation}

\begin{proof}
For sufficiently large $t$, $\{u>t\}$ contains $q-1$ connected
components, which belong to small balls $B_{r}(p_{i})$ for $i=1,\cdots,q-1$
for some small $r.$

Denote 
\[
A_{i}(t)=\fint_{\{u>t\}\cap B_{r}(p_{i})}e^{4u}dl
\]

and 
\[
D_{i}(t)=\frac{1}{4}\fint_{\{u=t\}\cap B_{r}(p_{i})}2H|\nabla u|^{2}-2|\nabla u|^{3}dl,
\]

where $A(t)=\sum_{i=1}^{q-1}A_{i}(t)$ and $D(t)=\sum_{i=1}^{q-1}D_{i}(t).$

By making use of the equation (\ref{k-equation}),

\begin{align*}
A_{i}(t) & =\frac{2}{3}(D_{i}(t)-\lim_{r_{0}\rightarrow0}\frac{1}{4}\fint_{\partial B_{r_{0}}(p_{i})}2H|\nabla u|^{2}-3|\nabla u|^{3}+|\nabla u|^{3}dl)\\
 & =\frac{2}{3}(D_{i}(t)-(\frac{3}{2}|\beta_{i}|^{2}-\frac{1}{2}|\beta_{i}|^{3})).
\end{align*}
\\
By $\lim_{t\rightarrow\infty}A_{i}(t)=0,$ 
\[
\lim_{t\rightarrow\infty}D_{i}(t)=\frac{3}{2}|\beta_{i}|^{2}-\frac{1}{2}|\beta_{i}|^{3}.
\]

As
\begin{align*}
D_{i}(t) & =\frac{1}{4}\fint_{\{u=t\}\cap B_{r}(p_{i})}2H|\nabla u|^{2}-2|\nabla u|^{3}dl\\
 & =\frac{1}{4}\fint_{\beta_{i}\log|x-p_{i}|+v(x)=t}2(\frac{3}{|x-p_{i}|}+o(\frac{1}{|x-p_{i}|}))(\frac{\beta_{i}^{2}}{|x-p_{i}|^{2}}+o(\frac{1}{|x-p_{i}|^{2}})|dl\\
 & \quad-\frac{1}{4}\fint_{\beta_{i}\log|x-p_{i}|+v(x)=t}2(\frac{|\beta_{i}|^{3}}{|x-p_{i}|^{3}}+o(\frac{1}{|x-p_{i}|^{3}})|dl\\
 & =\frac{1}{4}\fint_{\beta_{i}\log|x-p_{i}|+v(x)=t}(\frac{6\beta_{i}^{2}-2|\beta_{i}|^{3}}{|x-p_{i}|^{3}}+o(\frac{1}{|x-p_{i}|^{3}}))\\
 & =(\frac{3}{2}|\beta_{i}|^{2}-\frac{1}{2}|\beta_{i}|^{3})\fint_{\beta_{i}\log|x-p_{i}|+v(x)=t}(\frac{1}{|x-p_{i}|^{3}}+o(\frac{1}{|x-p_{i}|^{3}})),
\end{align*}

taking $t\rightarrow\infty,$ we have
\[
\fint_{\beta_{i}\log|x-p_{i}|+v(x)=t}(\frac{1}{|x-p_{i}|^{3}}+o(\frac{1}{|x-p_{i}|^{3}}))dl\longrightarrow1.
\]

With the above equality, we get the other equalities.

Therefore we have proved the lemma.
\end{proof}
For future use, we prove the following
\begin{lem}
\label{lem:fundamental relation}Notations as above, we have
\begin{align}
C' & =A'+4C,\label{eq:C-A relation}\\
\frac{1}{3}\frac{d}{dt}(z^{3}) & =\fint_{L(t)}(\frac{H}{3}|\nabla u|-\nabla_{44}u)|\nabla u|.\label{eq:derivative of energy}
\end{align}
\end{lem}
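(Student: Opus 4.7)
The plan is to derive both identities from the co-area formula combined with the divergence theorem on the super-level set $S(t)$. For equation (\ref{eq:C-A relation}), write
\[
A(t) = \frac{1}{|S_3|}\int_{S(t)} e^{4u}\,dx, \qquad B(t) = \frac{1}{|S_3|}\int_{S(t)} dx,
\]
with $dx$ the Lebesgue measure on $\mathbb{R}^{4}$, and apply the co-area formula to obtain
\[
A(t) = \frac{1}{|S_3|}\int_{t}^{\infty} e^{4s}\int_{L(s)}\frac{dH^{3}}{|\nabla u|}\,ds, \qquad B(t) = \frac{1}{|S_3|}\int_{t}^{\infty}\int_{L(s)}\frac{dH^{3}}{|\nabla u|}\,ds.
\]
Differentiating in $t$ gives $A'(t) = e^{4t} B'(t)$, and $C'=4C+A'$ then follows from $C = e^{4t}B$ by the product rule.

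For (\ref{eq:derivative of energy}), the starting observation is $z^{3} = -\Sigma_0(t) = -\fint_{L(t)}|\nabla u|^{3}\,dH^{3}$. The plan is to first establish the general moving-surface identity
\[
\frac{d}{dt}\int_{L(t)}\phi\,dH^{3} \;=\; \int_{L(t)}\frac{\mathrm{div}\bigl(\phi\,\nabla u/|\nabla u|\bigr)}{|\nabla u|}\,dH^{3}
\]
for smooth $\phi$, by applying the divergence theorem on $S(t)$ (outward normal $-\nabla u/|\nabla u|$) to rewrite $\int_{L(t)}\phi\,dH^3$ as a volume integral, then feeding the result into the co-area formula and differentiating in $t$. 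Specializing to $\phi = |\nabla u|^{3}$, I would expand the divergence as a sum of a term involving $\mathrm{div}(\nabla u/|\nabla u|) = -H$ (the paper's definition from Lemma \ref{lem:asy of u}) and a normal-derivative term. In the local coordinate of the excerpt, $\partial_{x^{4}} = -\nabla u/|\nabla u|$ and $u_{4} = -|\nabla u|$, so the normal derivative of $|\nabla u|^{3}$ translates into $-3|\nabla u|^{2}\,\nabla_{44}u$. Assembling yields
\[
\frac{d}{dt}\Sigma_{0}(t) \;=\; \fint_{L(t)}\bigl(3\nabla_{44}u\,|\nabla u| - H|\nabla u|^{2}\bigr)\,dH^{3},
\]
and multiplying by $-\tfrac{1}{3}$ recovers the desired formula after factoring out $|\nabla u|$.

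The main obstacle is that the divergence theorem on $S(t)$ needs care at the conic points $\{p_i\}$, where $\nabla u/|\nabla u|$ is singular, and the co-area formula needs $L(t)$ to be a regular hypersurface, which fails on the critical set $S = \{\nabla u = 0\}$. The critical-set issue is resolved by Lemma \ref{lem:measure 0}, giving $\mathcal{H}^{3}(S) = \mathcal{H}^{4}(S) = 0$ under the hypothesis $\sigma_2 > 0$. The conic points are handled by applying the divergence theorem on $S(t)\setminus\bigcup_i B_{\varepsilon}(p_i)$ and letting $\varepsilon\to 0$; by the asymptotics $|\nabla u|\sim|\beta_i|/|x-p_i|$ of Lemma \ref{lem:asy of u}, the boundary contributions on $\partial B_{\varepsilon}(p_i)$ are finite and $t$-independent in the limit, so they drop out when we differentiate in $t$.
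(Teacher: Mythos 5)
Your proposal is correct and follows essentially the same route as the paper: co-area formula for $A'(t)=e^{4t}B'(t)$ and the product rule for $C'$, then divergence theorem plus co-area and the Gauss--Weingarten expansion in the adapted frame (where $\partial_{x^4}=-\nabla u/|\nabla u|$, $\sum_\alpha\nabla_{\alpha\alpha}u=-H|\nabla u|$) to compute $\frac{d}{dt}\Sigma_0$. The paper handles the singular points by working on $S(t)\setminus S(t_0)$ with a fixed $t_0$ rather than excising $\varepsilon$-balls, and invokes Lemma \ref{lem:measure 0} for the critical set exactly as you do; these are cosmetic differences.
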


\begin{proof}
As $A(t)$ and $B(t)$ are non-increasing with respect to $t,$ $A'(t)$
and $B'(t)$ exist almost everywhere. From the co-area formula (see
Lemma 2.3 in \cite{BZ}), we have, for given $t_{1}<t_{2},$
\[
B(t_{1})-B(t_{2})=\mathcal{H}^{4}(S\cap u^{-1}(([t_{1},t_{2}))+\int_{t_{1}}^{t_{2}}\int_{L(s)\backslash S}|\nabla u|^{-1}d\mathcal{H}^{3}ds,
\]

By Lemma \ref{lem:measure 0}, $\mathcal{H}^{3}(S\cap u^{-1}([t_{1},t_{2}))=0.$
Thus, $B(t)$ is absolutely continuous in any finite interval $[t_{1},t_{2}]$
as in \cite{FL2}. Similarly, $A(t)$, $C(t)$ and $z(t)$ are absolutely
continuous. Therefore, when these derivatives exist, we have:

\begin{align*}
B^{'}(t) & =\frac{1}{|S_{3}|}\frac{\partial}{\partial t}\int_{t}^{+\infty}\int_{L(s)\backslash S}\frac{1}{|\nabla u|}dlds,\\
 & =-\frac{1}{|S_{3}|}\int_{L(t)\backslash S}\frac{1}{|\nabla u|}dl,
\end{align*}
and 
\begin{align*}
A'(t) & =\frac{1}{|S_{3}|}\frac{\partial}{\partial t}\int_{t}^{+\infty}\int_{L(s)\backslash S}\frac{e^{4u}}{|\nabla u|}dlds\\
 & =e^{4t}B'(t).
\end{align*}
Note that by Lemma \ref{lem:measure 0}, $\mathcal{H}^{3}(S\cap L(t))=0.$
From here on, when no confusion arises, integrals over $L(t)$ should
be thought as integrals over $L(t)\backslash S$. 
\begin{align*}
C'(t) & =4e^{4t}B(t)+e^{4t}B'(t)\\
 & =4C(t)+A'(t).
\end{align*}

By the divergence theorem, co-area formula, and (\ref{eq:gauss-weingarten1}),
we obtain

\begin{align*}
\frac{d}{dt}\int_{L(t)}|\nabla u|^{3} & =\frac{d}{dt}\int_{S(t)\backslash S(t_{0})}-{\rm div}(\nabla u|\nabla u|^{2})\\
 & =\int_{L(t)}\frac{{\rm div}(\nabla u|\nabla u|^{2})}{|\nabla u|}
\end{align*}
where $t_{0}$ is a fixed real number. Note that for any $P\in L(t)\backslash S$,
using our preferred coordinate system, we have $u_{4}(P)=-|\nabla u|(P)$
and

\begin{align*}
\frac{{\rm div}(\nabla u|\nabla u|^{2})}{|\nabla u|} & =\frac{\sum_{\alpha=1}^{3}\nabla_{\alpha\alpha}u|\nabla u|^{2}}{|\nabla u|}-\frac{\sum_{\alpha=1}^{3}\nabla_{\alpha}u\nabla_{\alpha}|\nabla u|^{2}}{|\nabla u|}+\frac{\nabla_{44}u|\nabla u|^{2}}{|\nabla u|}+\frac{\nabla_{4}u\nabla_{4}|\nabla u|^{2}}{|\nabla u|}\\
 & =-H|\nabla u|^{2}+3\nabla_{44}u|\nabla u|.
\end{align*}
We thus get (\ref{eq:derivative of energy}).

\end{proof}
Finally, we will use the divergence structure of $\sigma_{m}$ and
equation (\ref{k-equation}) to establish the relationship between
$A$ and $D.$ 
\begin{thm}
\label{lem:D-Gauss}Let $u(x)$ be a smooth solution to (\ref{k-equation})
on $\mathbb{R}^{4}\backslash\{p_{1},\,\cdots,\,p_{q-1},\,p_{\infty}\}$
with $e^{2u}g_{E}\in\mathcal{C}_{2}^{+}$ , 
\begin{equation}
\frac{1}{|S_{3}|}\int_{\{u\ge t\}}\sigma_{2}(A)dvol=D(t)-D(+\infty),\label{eq:D-Gauss}
\end{equation}

\begin{equation}
A(t)=\frac{2}{3}(D(t)-D(+\infty)),\label{eq:A-D-relation}
\end{equation}

\begin{equation}
A(t)=\frac{1}{6}(\Sigma_{0}+\Sigma_{1})-\frac{2}{3}D(+\infty),\label{eq:A-=00003D00005Csigmarelation}
\end{equation}

\begin{equation}
A'(t)=\frac{2}{3}D'(t)=\frac{1}{6}(\Sigma_{0}'+\Sigma_{1}').\label{eq:A'-=00003D00005Csigma' relation}
\end{equation}
\end{thm}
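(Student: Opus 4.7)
My plan is to derive (\ref{eq:D-Gauss}) by applying the divergence formula of Lemma \ref{lem:divform} to the super-level set $\{u\ge t\}$, after excising small balls $B_{\epsilon}(p_{l})$ around each interior singular point $p_{l}$ ($l=1,\ldots,q-1$) and letting $\epsilon\to 0$. A preliminary observation is that, since $n=4$ and $m=2$, the conformal rescaling cancels in the integrand: $\sigma_{2}(g^{-1}A_{g})\,dv_{g}=\sigma_{2}(g_{E}^{-1}A_{g})\,dv_{E}$, so the entire calculation may be carried out in the Euclidean background. Setting $W_{i}:=(-\Delta u\,\delta_{ij}+u_{ij}-u_{i}u_{j})u_{j}$, Lemma \ref{lem:divform} gives $\sigma_{2}(g_{E}^{-1}A_{g})=-\frac{1}{2}\partial_{i}W_{i}$, and the divergence theorem expresses the integral over the truncated region as a boundary integral on $L(t)$ plus boundary integrals on each $\partial B_{\epsilon}(p_{l})$.

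For the $L(t)$ piece I would use the adapted frame of Section 3 with $\partial_{4}=-\nabla u/|\nabla u|$, in which $u_{\alpha}=0$ for $\alpha=1,2,3$ and $u_{4}=-|\nabla u|$. Combining (\ref{eq:gauss-weingarten1}) with $\nabla^{L}u\equiv 0$ on level sets gives $\nabla_{\alpha\beta}u=h_{\alpha\beta}u_{4}$ and hence $\Delta u=u_{44}-H|\nabla u|$. A short calculation then simplifies the integrand on $L(t)\setminus S$ to
\[
W_{i}\nu_{i}=-\frac{W_{i}u_{i}}{|\nabla u|}=-H|\nabla u|^{2}+|\nabla u|^{3},
\]
where $\nu=-\nabla u/|\nabla u|$ is the outward unit normal of $\{u\ge t\}$. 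Dividing by $|S_{3}|$ and recognizing $\Sigma_{0}$ and $\Sigma_{1}$, this contribution equals exactly $D(t)=\frac{1}{4}(\Sigma_{0}+\Sigma_{1})$.

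The inner boundary contributions are computed in the limit $\epsilon\to 0$ by substituting the asymptotic expansions of Lemma \ref{lem:asy of u}. The calculation mirrors the one carried out in the proof of Theorem \ref{thm:Gauss-bonnet-chern-1}, with the sole difference that the outward normal of the truncated domain points \emph{into} $B_{\epsilon}(p_{l})$, reversing the sign. Each $p_{l}$ therefore contributes $-|S_{3}|f(\beta_{l})$ in the limit, and summing gives $-|S_{3}|\sum_{l=1}^{q-1}f(\beta_{l})$. The check that this equals $-|S_{3}|D(+\infty)$ reduces to comparing $f(\beta)=\frac{\beta^{3}+3\beta^{2}}{2}$ with the explicit formula (\ref{eq:D+infinity}), a purely algebraic identity. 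Assembling the two boundary contributions establishes (\ref{eq:D-Gauss}).

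The remaining three identities then follow formally. Equation (\ref{k-equation}) evaluates $\sigma_{2}(g^{-1}A_{g})$ to the constant $3/2$, so $\frac{1}{|S_{3}|}\int_{\{u\ge t\}}\sigma_{2}\,dv_{g}=\frac{3}{2}A(t)$; comparing with (\ref{eq:D-Gauss}) yields (\ref{eq:A-D-relation}). Substituting $D=\frac{1}{4}(\Sigma_{0}+\Sigma_{1})$ gives (\ref{eq:A-=00003D00005Csigmarelation}), and differentiating in $t$ (legitimated by the absolute continuity already established in the proof of Lemma \ref{lem:fundamental relation}) gives (\ref{eq:A'-=00003D00005Csigma' relation}). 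The main technical hurdle is the sign- and index-bookkeeping in the inner boundary limit, but since this essentially reuses a calculation already performed in Section 2, no genuinely new analytic input is required.
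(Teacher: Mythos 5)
Your argument — applying Lemma \ref{lem:divform} and the divergence theorem to $\{u\ge t\}$ with small balls excised, evaluating the $L(t)$ boundary term in the adapted frame via (\ref{eq:gauss-weingarten1}), evaluating the inner boundaries with the asymptotics of Lemma \ref{lem:asy of u}, and then invoking (\ref{k-equation}) to pass to (\ref{eq:A-D-relation})--(\ref{eq:A'-=00003D00005Csigma' relation}) — is exactly the computation that the paper's very terse proof (which only cites Lemma \ref{lem:divform} and (\ref{eq:expression of A})) leaves implicit, and the essential identities $W_i\nu_i=-H|\nabla u|^2+|\nabla u|^3$ on $L(t)$ and $\sum_{l=1}^{q-1}f(\beta_l)=D(+\infty)$ are both correct. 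One small caveat: the remark that the normal ``points into $B_\epsilon(p_l)$, reversing the sign'' relative to Theorem \ref{thm:Gauss-bonnet-chern-1} is misleading — in that theorem the domain also lies outside $B_\epsilon(p_l)$, so the inner-boundary contribution has the same sign and normal orientation in both computations (the paper already absorbs the orientation flip into the $+\lim_\epsilon\sum_l I_\epsilon^l$ sign convention), and your stated final coefficient $-|S_3|f(\beta_l)$ is in fact consistent with no reversal. This is a harmless slip in the bookkeeping narration, not in the result.
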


\begin{proof}
By Lemma \ref{lem:divform} and (\ref{eq:expression of A}), 
\begin{align*}
\int_{\{u\ge t\}}2\sigma_{2}(g^{-1}A_{g})dv_{g} & =\int_{\{u=t\}}H|\nabla u|^{2}-\int_{\{u=t\}}|\nabla u|^{3}-2D(+\infty).
\end{align*}
So we can get (\ref{eq:D-Gauss}-\ref{eq:A'-=00003D00005Csigma' relation})
by equation (\ref{k-equation}).
\end{proof}

\section{Proof of main theorems}

In this section, we prove our main theorems.

First, we state our key estimate.
\begin{lem}
\label{lem:key inequality } Let u(x) be a smooth solution to equation
(\ref{k-equation}) on $\mathbb{R}^{4}\backslash\{p_{1},\,\cdots,\,p_{q}\}$
with $e^{2u}g_{E}\in\mathcal{C}_{2}^{+}.$ We have

\[
z'\fint_{L(t)}\sigma_{1}(\widetilde{A})|\nabla u|dl(zA')^{2}\ge\frac{3}{2}(4C(t))^{3}.
\]
\end{lem}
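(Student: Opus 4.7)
My plan is to establish the inequality via a chain of sharp estimates, each of which is saturated on radial (football) solutions: the pointwise $\sigma_{2}$ equation combined with Newton's inequality, two applications of Cauchy--Schwarz on $L(t)$, and the sharp Euclidean isoperimetric inequality in $\mathbb{R}^{4}$. First, at any regular point $P\in L(t)\setminus S$, the block form (\ref{eq:expression of A}) yields
\[
\sigma_{2}(g_{E}^{-1}A_{g})=\sigma_{2}(\widetilde{A})+\sigma_{1}(\widetilde{A})\bigl(-u_{44}+\tfrac{|\nabla u|^{2}}{2}\bigr)-\sum_{\alpha}u_{\alpha 4}^{2}=\tfrac{3}{2}e^{4u}.
\]
Dropping the nonpositive term $-\sum_\alpha u_{\alpha 4}^{2}$ and applying Newton's inequality $\sigma_{2}(\widetilde{A})\le\tfrac{1}{3}\sigma_{1}(\widetilde{A})^{2}$ for $3\times3$ symmetric matrices (sharp when $\widetilde{A}$ is a scalar matrix), together with the algebraic identity $3(-u_{44}+|\nabla u|^{2}/2)+\sigma_{1}(\widetilde{A})=H|\nabla u|-3u_{44}$, I obtain the pointwise inequality
\[
\sigma_{1}(\widetilde{A})\bigl(H|\nabla u|-3u_{44}\bigr)\ge\tfrac{9}{2}e^{4u},
\]
which in particular forces $\sigma_{1}(\widetilde{A})>0$ strictly, so division by $\sigma_1(\widetilde A)$ is always legitimate.

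Second, dividing by $\sigma_{1}(\widetilde{A})$, multiplying by $|\nabla u|$, and averaging over $L(t)$, the left-hand side becomes $\fint(H|\nabla u|^{2}-3u_{44}|\nabla u|)=3z^{2}z'$ by (\ref{eq:derivative of energy}), hence
\[
z^{2}z'\ge\tfrac{3}{2}e^{4t}\fint_{L(t)}\tfrac{|\nabla u|}{\sigma_{1}(\widetilde{A})}.
\]
Multiplying through by $\fint\sigma_{1}(\widetilde{A})|\nabla u|$ and applying Cauchy--Schwarz with the factors $\sqrt{\sigma_{1}|\nabla u|}$ and $\sqrt{|\nabla u|/\sigma_{1}}$ (whose product is $|\nabla u|$), then multiplying once more by $A'^{2}=e^{8t}(\fint 1/|\nabla u|)^{2}$ and invoking the second Cauchy--Schwarz $(\fint 1)^{2}\le(\fint|\nabla u|)(\fint 1/|\nabla u|)$, I arrive at
\[
z^{2}z'\cdot\fint\sigma_{1}(\widetilde{A})|\nabla u|\cdot A'^{2}\ge\tfrac{3}{2}e^{12t}\Bigl(\fint 1\,dl\Bigr)^{4}.
\]

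Finally, applying the sharp isoperimetric inequality in $\mathbb{R}^{4}$ to the open set $S(t)$---valid since the critical set on $L(t)$ has vanishing $\mathcal{H}^{3}$-measure by Lemma \ref{lem:measure 0} and since the inequality respects disjoint unions---gives $|L(t)|^{4}\ge 128\pi^{2}V(t)^{3}=64|S_{3}|V(t)^{3}$, i.e., $(\fint 1\,dl)^{4}\ge 64 B(t)^{3}$. Combining yields exactly
\[
z^{2}z'\cdot\fint\sigma_{1}(\widetilde{A})|\nabla u|\cdot A'^{2}\ge 96\, e^{12t}B^{3}=\tfrac{3}{2}(4C)^{3},
\]
which is the claim (since $(zA')^{2}=z^{2}A'^{2}$). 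The main obstacle, and the real insight, is the algebraic identity $H|\nabla u|-3u_{44}=3\lambda+\sigma_{1}(\widetilde{A})$, which couples the pointwise $\sigma_{2}$ equation to the derivative formula (\ref{eq:derivative of energy}) for the level-set ``energy'' $(z^{3})'$; after that the argument reduces to a chain of classical sharp inequalities, each of which (Newton's inequality, the two Cauchy--Schwarz steps, and the Euclidean isoperimetric inequality) is simultaneously saturated on the football metrics, which at once verifies sharpness of the lemma and pins down its equality case.
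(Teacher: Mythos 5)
Your argument is correct and follows the paper's proof of Lemma \ref{lem:key inequality } essentially step for step: the same block decomposition of $\sigma_{2}(A)$ from (\ref{eq:expression of A}), dropping $-\sum_\alpha (\nabla_{\alpha 4}u)^2$, Newton's inequality for $\widetilde{A}$, identification of $\fint(\tfrac{H}{3}|\nabla u|-\nabla_{44}u)|\nabla u|$ with $z^2 z'$ via (\ref{eq:derivative of energy}), two applications of Cauchy--Schwarz, and the sharp isoperimetric inequality in $\mathbb{R}^4$. The only cosmetic difference is in the first Cauchy--Schwarz: you first divide the pointwise inequality by $\sigma_1(\widetilde{A})$, average, and then multiply back by $\fint\sigma_1(\widetilde{A})|\nabla u|$ applying Cauchy--Schwarz to $\sqrt{\sigma_1|\nabla u|}$ and $\sqrt{|\nabla u|/\sigma_1}$; the paper instead applies Cauchy--Schwarz directly to $\fint\sigma_1(\widetilde{A})|\nabla u|\cdot\fint(\tfrac{H}{3}|\nabla u|-\nabla_{44}u)|\nabla u|$ without dividing. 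These are the same estimate.

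One small point to tighten: you assert that the pointwise inequality $\sigma_1(\widetilde{A})(H|\nabla u|-3\nabla_{44}u)\ge\tfrac{9}{2}e^{4u}>0$ by itself forces $\sigma_1(\widetilde{A})>0$. On its own it only forces the two factors to share a sign. To rule out the case where both are negative you either need the standard fact that $T_1(g_E^{-1}A_g)=\sigma_1(A)I-A$ is positive definite whenever $g\in\mathcal{C}_2^+$, which gives $\sigma_1(\widetilde{A})=T_1(A)^4_4>0$ directly, or else combine your inequality with $\sigma_1(A)=\sigma_1(\widetilde{A})+(-\nabla_{44}u+\tfrac{|\nabla u|^2}{2})>0$: if $\sigma_1(\widetilde{A})<0$ and $H|\nabla u|-3\nabla_{44}u<0$, then $\nabla_{44}u>\tfrac{H|\nabla u|}{3}$ and $\sigma_1(A)>0$ force $\tfrac{H|\nabla u|}{3}<H|\nabla u|-|\nabla u|^2$, i.e.\ $\sigma_1(\widetilde{A})>0$, a contradiction. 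The paper's proof implicitly requires the same nonnegativity to make the square roots under the Cauchy--Schwarz real, so this is not a flaw relative to the source, but it is worth spelling out.
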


\begin{proof}
By (\ref{eq:expression of A}), we obtain 
\begin{align}
\sigma_{2}(A) & =\sigma_{2}(\widetilde{A})+(-\nabla_{44}u+\frac{|\nabla u|^{2}}{2})\sigma_{1}(\widetilde{A})-\sum_{a=1}^{3}(\nabla_{a4}u)^{2}\nonumber \\
 & \le\sigma_{2}(\widetilde{A})+(-\nabla_{44}u+\frac{|\nabla u|^{2}}{2})\sigma_{1}(\widetilde{A}).\label{eq:key1}
\end{align}
Here $n=4.$

Especially, because $(\sum_{i=1}^{3}\lambda_{i})^{2}=\sum_{i=1}^{3}\lambda_{i}^{2}+2\sum_{i<j}\lambda_{i}\lambda_{j}\ge3\sigma_{2}$
holds, we have 
\begin{equation}
\sigma_{2}(\widetilde{A})\le\frac{\sigma_{1}^{2}(\widetilde{A})}{3},\label{eq:key2}
\end{equation}
for any $\widetilde{A.}$

Then by (\ref{eq:key1}) and (\ref{eq:key2}), 
\begin{align*}
\sigma_{2}(A) & \le\frac{\sigma_{1}(\widetilde{A})\sigma_{1}(\widetilde{A})}{3}+(-\nabla_{44}u+\frac{|\nabla u|^{2}}{2})\sigma_{1}(\widetilde{A})\\
 & \le\sigma_{1}(\widetilde{A})(\frac{H}{3}|\nabla u|-\nabla_{44}u).
\end{align*}

As we have 
\[
\sigma_{2}(A)=\frac{3}{2}e^{4u},
\]

we can then derive the following using Cauchy inequality

\begin{align}
 & \fint_{L(t)}\sigma_{1}(\widetilde{A})|\nabla u|dl\fint_{L(t)}(\frac{H}{3}|\nabla u|-\nabla_{44}u)|\nabla u|\nonumber \\
\ge & \bigg(\fint_{L(t)}\sqrt{\sigma_{1}(\widetilde{A})|\nabla u|(\frac{H}{3}|\nabla u|-\nabla_{44}u)|\nabla u|}\bigg)^{2}\nonumber \\
\ge & \bigg(\fint_{L(t)}\sqrt{\frac{3}{2}e^{4u}|\nabla u|^{2}}\bigg)^{2}\nonumber \\
\ge & \frac{3}{2}e^{4t}\big(\fint_{L(t)}|\nabla u|\big)^{2}.\label{eq:key 3}
\end{align}

Combine equality (\ref{eq:key 3}) with Lemma \ref{lem:fundamental relation},
we get that 
\begin{align*}
 & (A')^{2}\fint_{L(t)}\sigma_{1}(\widetilde{A})|\nabla u|\fint_{L(t)}(\frac{H}{3}|\nabla u|-\nabla_{44}u)|\nabla u|dl\\
\ge & \frac{3}{2}\big(\fint_{L(t)}|\nabla u|\big)^{2}e^{4t}\cdot e^{8t}(\fint_{L(t)}\frac{1}{|\nabla u|})^{2}\\
= & \frac{3}{2}e^{12t}(\fint_{L(t)}|\nabla u|)^{2}(\fint_{L(t)}\frac{1}{|\nabla u|})^{2}\\
\ge & \frac{3}{2}e^{12t}|L(t)|^{4}\frac{1}{|S_{3}|^{4}}\\
\ge & \frac{3}{2}e^{12t}B(t)^{3}4^{4}|B_{1}|\frac{1}{|S_{3}|^{4}}\\
= & \frac{3}{2}(4C(t))^{3},
\end{align*}
where the third inequality is due to Cauchy inequality and the fourth
inequality holds because of the isoperimetric inequality. We note
that $|B_{1}|$=$\frac{|S_{3}|}{4}$. 
\end{proof}
We remark that if $u$ is the radial solution to equation (\ref{k-equation}),
all the above inequalities are equality. 

We then prove the following 
\begin{thm}
\textup{\label{thm:monotone inequality}} Let u(x) be a smooth solution
to (\ref{k-equation}) on $\mathbb{R}^{4}\backslash\{p_{1},\,\cdots,\,p_{q-1},\,p_{\infty}\}$
with $e^{2u}g_{E}\in\mathcal{C}_{2}^{+}$,\textup{ }then for generic
$t$,\textup{
\[
C'\le\frac{2}{3}D'+\frac{4}{9}(zD)'+\frac{1}{36}(z^{4})'.
\]
 In particular, the quantity 
\[
M(t)=\frac{2}{3}D(t)+\frac{4}{9}D(t)z(t)+\frac{1}{36}z^{4}(t)-C(t)
\]
 is monotonously increasing with respect to $t.$} 
\end{thm}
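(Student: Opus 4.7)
The plan is to compute $M'(t)$ directly, reduce the desired inequality to a single pointwise estimate at each $t$, and then feed it into Lemma~\ref{lem:key inequality } through a weighted AM-GM step.

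First I would differentiate $M(t) = \tfrac{2}{3}D + \tfrac{4}{9}Dz + \tfrac{1}{36}z^4 - C$ and substitute the two structural identities already available from Lemma~\ref{lem:fundamental relation} and Theorem~\ref{lem:D-Gauss}, namely $C' = A'+4C$ and $A' = \tfrac{2}{3}D'$. The two $\tfrac{2}{3}D'$ contributions cancel, leaving
\[
M'(t) \;=\; \tfrac{2}{3}\,z(t)\,A'(t) \;+\; \tfrac{1}{9}\bigl(4D(t)+z(t)^3\bigr)z'(t) \;-\; 4C(t).
\]
Next comes the key algebraic observation: from the matrix (\ref{eq:expression of A}) one reads $\sigma_1(\widetilde A) = H|\nabla u|-\tfrac{3}{2}|\nabla u|^2$, and combining this with the definitions of $D$, $\Sigma_0$ and $z$ yields the clean identity
\[
E(t) \;:=\; \fint_{L(t)}\sigma_1(\widetilde A)\,|\nabla u|\,dl \;=\; 2D(t) + \tfrac{1}{2}z(t)^3,
\]
so $4D+z^3 = 2E$. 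The claim $M'(t)\ge 0$ therefore reduces to the pointwise inequality
\[
3\,z(t)\,A'(t) \;+\; E(t)\,z'(t) \;\ge\; 18\,C(t). \qquad(\star)
\]

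The second step is to invoke Lemma~\ref{lem:key inequality }. Its second integral factor is precisely $\fint(\tfrac{H}{3}|\nabla u| - \nabla_{44}u)|\nabla u| = \tfrac{1}{3}(z^3)' = z^2 z'$ by (\ref{eq:derivative of energy}), so the lemma reads $(zA')^2\,E\,z' \ge 96\,C^3$. Setting $X := 3zA'$ and $Y := Ez'$, this becomes $X^2Y \ge 864\,C^3$. A sign check is immediate: $z\le 0$ and $A'\le 0$ give $X\ge 0$, and since $C>0$ the key estimate forces $X>0$ and hence $Y>0$. Applying AM-GM to the triple $(X/2,\,X/2,\,Y)$ then gives
\[
X+Y \;\ge\; 3\Bigl(\tfrac{X^2 Y}{4}\Bigr)^{1/3} \;\ge\; 3\,\bigl(216\,C^3\bigr)^{1/3} \;=\; 18\,C,
\]
which is exactly $(\star)$. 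The absolute continuity of $A$, $B$, $C$, $z$, $D$ on bounded intervals (established in the proof of Lemma~\ref{lem:fundamental relation}) then upgrades the a.e.\ inequality $M'\ge 0$ to monotonicity of $M$.

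The difficult part is not analytic but combinatorial: one has to guess the correct linear combination $\tfrac{2}{3}D + \tfrac{4}{9}Dz + \tfrac{1}{36}z^4 - C$ so that after using $C'=A'+4C$ and $A'=\tfrac{2}{3}D'$, the surviving coefficients land exactly in the $2:1$ split required by AM-GM on the triple $(X/2,X/2,Y)$. Once one spots the identity $4D+z^3 = 2E$ and the numerical coincidence $864/4 = 216 = 6^3$ matching $18 = 3\cdot 6$, the remaining work is a short calculation; the real content is the quartic monotone quantity $M$ itself, which is the paper's new insight.
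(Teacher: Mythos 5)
Your proof is correct and is essentially the same argument the paper gives, just run in the opposite direction: you differentiate $M$ first, use $C'=A'+4C$, $A'=\tfrac{2}{3}D'$, $D=\tfrac14(\Sigma_0+\Sigma_1)$ and $\Sigma_0=-z^3$ to reduce everything to the single inequality $3zA'+Ez'\ge 18C$, and then apply AM--GM together with Lemma~\ref{lem:key inequality }, whereas the paper applies AM--GM to the triple $(zA',\,zA',\,\tfrac{2}{3}z'E)$ at the outset and then rearranges into the stated form. The substance — the same weighted AM--GM fed by Lemma~\ref{lem:key inequality } and the same structural identities — is identical, so this is a reorganization rather than a different route.
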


\begin{proof}
By Lemma \ref{lem:key inequality } and the inequality of arithmetic
and geometric means, we have

\begin{align}
4C & \le\frac{1}{3}(2zA'+\frac{2}{3}z'\fint_{L(t)}\sigma_{1}(\widetilde{A})|\nabla u|),\label{eq:11}
\end{align}
where the identity holds if and only if $zA'=\frac{2}{3}z'\fint_{L(t)}\sigma_{1}(\widetilde{A})|\nabla u|.$

Using fact that $C'=4C+A'$, (\ref{eq:11}) then becomes

\begin{align*}
C' & \le A'+\frac{1}{3}(2zA'+\frac{2}{3}z'\fint_{L(t)}\sigma_{1}(\widetilde{A})|\nabla u|)\\
 & \le\frac{1}{3}(2D'+\frac{4}{3}z(D'(t))+\frac{1}{3}z'\Sigma_{1})\\
 & =\frac{2}{3}D'+\frac{1}{3}\big(\frac{4}{3}(zD)'-\frac{4}{3}z'D+\frac{1}{3}z'\Sigma_{1}\big)\\
 & =\frac{2}{3}D'+\frac{1}{3}\big(\frac{4}{3}(zD)'-\frac{1}{3}z'(\Sigma_{0}+\Sigma_{1})+\frac{1}{3}z'\Sigma_{1}\big)\\
 & =\frac{2}{3}D'+\frac{1}{3}\big(\frac{4}{3}(zD)'+\frac{1}{3}z'z^{3}\big).
\end{align*}

The key estimate is thus established.
\end{proof}
Finally, we are ready to prove our main result, Theorem \ref{thm:Main theorem-1}.
\begin{proof}
By Theorem \ref{thm:monotone inequality}, we have

\begin{align*}
 & C(+\infty)-C(-\infty)\\
\le & \frac{2}{3}(D(+\infty)-D(-\infty))+\frac{4}{9}(z(+\infty)D(+\infty)-z(-\infty)D(-\infty))+\frac{1}{36}(z^{4}(+\infty)-z^{4}(-\infty)).
\end{align*}
Considering (\ref{eq:C-infinity}), (\ref{eq:D+infinity}), (\ref{eq:D-infinity})
and (\ref{eq:z-infinity}), we compute to get

\[
\frac{3}{8}\beta_{\infty}^{2}(\beta_{\infty}+2)^{2}\le\frac{3}{8}\widetilde{\beta}^{2}(\widetilde{\beta}+2)^{2}+(\widetilde{\beta}+\frac{3}{2})(\sum_{i=1}^{q-1}\beta_{i}^{2}-\widetilde{\beta}^{2}).
\]
Therefore, if $\frac{3}{8}\beta_{\infty}^{2}(\beta_{\infty}+2)^{2}>\frac{3}{8}\widetilde{\beta}^{2}(\widetilde{\beta}+2)^{2}+(\widetilde{\beta}+\frac{3}{2})(\sum_{i=1}^{q-1}\beta_{i}^{2}-\widetilde{\beta}^{2})$,
such solutions do not exist.

Furthermore, when $\frac{3}{8}\beta_{\infty}^{2}(\beta_{\infty}+2)^{2}=\frac{3}{8}\widetilde{\beta}^{2}(\widetilde{\beta}+2)^{2}+(\widetilde{\beta}+\frac{3}{2})(\sum_{i=1}^{q-1}\beta_{i}^{2}-\widetilde{\beta}^{2})$,
all the inequalities in Lemma \ref{lem:key inequality } and \ref{thm:monotone inequality}
become equalities. In particular, the isoperimetric inequality being
sharp implies that $u(x)$ is radial, which means that $u$ has at
most two singular points $p_{1},\,p_{2}=\infty.$ It is easy to check
that $\beta_{1}=\beta_{2}.$ We have proved that $M$ is the football
described first by \cite{CHY}.
\end{proof}
\begin{rem}
Li in \cite{L} used the moving sphere method to prove that a smooth
solution $u$ to equation (\ref{k-equation}) on $\mathbb{R}^{4}\backslash\{0\}$
in $\mathcal{C}_{2}^{+}$ is radial. Combine with Chang-Han-Yang's
\cite{CHY}, the radial solution to equation (\ref{k-equation}) is
a football. This is a generalization of the uniqueness result for
the smooth case, which is first proved in \cite{V1}. Our approach
is different.
\end{rem}

\end{document}